\newif\ifdraft \drafttrue  % Set true or false for draft or final copies.
\def\Diamond{*}
\edef\temp{(\noexpand\firstpage is set to \the\firstpage.)}
\newtheorem{THEOREM}{Theorem}[section]
\newtheorem{PROPOSITION}[THEOREM]{Proposition}
\newtheorem{DEFINITION}[THEOREM]{Definition}
\newtheorem{LEMMA}[THEOREM]{Lemma}
\newtheorem{COROLLARY}[THEOREM]{Corollary}
\newtheorem{EXAMPLE}[THEOREM]{Example}
\newtheorem{REMARK}[THEOREM]{Remark}
\newtheorem{REMARKS}[THEOREM]{Remarks}
\newtheorem{DISCUSSION}[THEOREM]{}
\newtheorem{ALGORITHM}[THEOREM]{Algorithm}
\newtheorem{PROBLEM}[THEOREM]{Problem}
\def\thmstretch{plus.5em minus.1em}
\def\thmskip{0pt \thmstretch}
\def\thmstart{\hskip\thmskip\ignorespaces}
\let\thmtextfont=\it
\newif\ifthmtitle \thmtitletrue
\newenvironment{theorem}%
  {\let\thmtextfont=\it \thmtitletrue \begin{THEOREM}}%
{\end{THEOREM}}
\newenvironment{proposition}%
  {\let\thmtextfont=\it \thmtitletrue \begin{PROPOSITION}}%
  {\end{PROPOSITION}}
\newenvironment{definition}%
  {\let\thmtextfont=\rm \thmtitletrue \begin{DEFINITION}}%
  {\end{DEFINITION}}
  {\let\thmtextfont=\it \thmtitletrue \begin{LEMMA}}%
  {\end{LEMMA}}
\newenvironment{corollary}%
  {\let\thmtextfont=\it \thmtitletrue \begin{COROLLARY}}%
  {\end{COROLLARY}}
\newenvironment{example}%
  {\let\thmtextfont=\rm \thmtitletrue \begin{EXAMPLE}}%
  {\end{EXAMPLE}}
\newenvironment{remark}%
  {\let\thmtextfont=\rm \thmtitletrue \begin{REMARK}}%
  {\end{REMARK}}
  {\let\thmtextfont=\rm \thmtitletrue \begin{REMARKS}}%
  {\end{REMARKS}}
  {\let\thmtextfont=\rm \thmtitlefalse \begin{DISCUSSION}}%
  {\end{DISCUSSION}}
\newenvironment{algorithm}%
  {\let\thmtextfont=\rm \thmtitletrue \begin{ALGORITHM}}%
  {\end{ALGORITHM}}
\newenvironment{problem}%
  {\let\thmtextfont=\rm \thmtitletrue \begin{PROBLEM}}%
  {\end{PROBLEM}}
\newenvironment{steps}{\begin{description}}{\end{description}}
\def\step[#1]{\item[\rm{\it Step\/}~#1]}
\def\@begintheorem#1#2{\thmtextfont \trivlist \ifthmtitle
  \item[\hskip \labelsep{\bf #1\ #2.}]\else
  \item[\hskip \labelsep{\bf #2.}]\fi\thmstart}
\def\@opargbegintheorem#1#2#3{\thmtextfont \trivlist
      \item[\hskip\labelsep{\bf #1\ #2\ {\rm(#3)}.}]\thmstart}
\def\specialsection#1{\noindent
  {\bf#1.}\hskip\labelsep\thmstretch\ignorespaces}
\renewenvironment{abstract}%
  {\par\addvspace{\bigskipamount}\small\specialsection{Abstract}}%
  {\par}
\newenvironment{keywords}%
  {\par\addvspace{\medskipamount}\small\it\specialsection{Keywords}}%
  {\tolerance=300\par}
\newenvironment{acknowledgments}%
  {\par\addvspace{\bigskipamount}\specialsection{Acknowledgments}}%
  {\tolerance=300\par}
\def\affiliation#1{\date{\normalsize\it #1}}
\newif\ifmathtomb \mathtombfalse
\def\tombstone{\unskip\penalty50   % Tombstone breaks to right margin.
  \hskip 0pt plus-1fill \null\nobreak\hskip 0pt plus1fill
  \enskip \vrule width.3333em height.7em depth.2em
  \ifmmode \global\mathtombtrue \else \global\mathtombfalse \fi}
\newenvironment{proof}% Use \tombstone explicitly in display math
  {\futurelet\next\pr@oftext}% Optional argument for proof text.
  {\ifmathtomb \else \tombstone \fi \widowpenalty=10000  % No widow line.
   \par \ifmathtomb \else \addvspace{\medskipamount}\fi \global\mathtombfalse}
\def\pr@oftext{\ifx\next[\let\temp\opr@@ftext\else\let\temp\pr@@ftext\fi\temp}
\def\pr@@ftext{\beginpr@@f{Proof}}
\def\opr@@ftext[#1]{\beginpr@@f{#1}}
\def\beginpr@@f#1{\par \addvspace{\bigskipamount}%
  \noindent{\bf #1.}\hskip\labelsep\thmstretch\ignorespaces}
\def\@citex[#1]#2{\if@filesw\immediate\write\@auxout{\string\citation{#2}}\fi
  \def\@citea{}\@cite{\@for\@citeb:=#2\do
    {\@citea\def\@citea{,\penalty\@m\thinspace }\@ifundefined
       {b@\@citeb}{{\bf ?}\@warning
       {Citation `\@citeb' on page \thepage \space undefined}}%
\hbox{\bf\csname b@\@citeb\endcsname}}}{#1}}
\newtoks\headline \headline={\hfil} % headline is normally blank
\newtoks\footline \footline={\hss\rm\thepage\hss}
\def\ps@plain{\let\@mkboth\@gobbletwo
  \def\@oddhead{\the\headline}\let\@evenhead\@oddhead
  \def\@oddfoot{\the\footline}\let\@evenfoot\@oddfoot}
\def\runningtitle{Optimization on Riemannian Manifolds}  % Use caps.
\def\runningname{Steven T. Smith}
\def\copyrightinfo{\copyright\ 1993 by the American Mathematical Society.}
\c@page \hfil{\footnotesize\rm
    \uppercase\expandafter{\runningtitle}}\hfil\llap{\rm\thepage}\else
      \uppercase\expandafter{\runningname}}\hfil\fi\fi}
\hfil\llap{\rm\thepage}\else
\def\publishinginfo{\vtop{\footnotesize\hbox{Fields Institute Communications}
  \hbox{Volume {\bf3}, 1994}}}
\def\parbox#1{\setbox1=\vtop{\small\rm\pretolerance=9999
  \parskip=0pt\noindent\ignorespaces #1\par}\dp1=0pt\box1}
\c@page \hfil{\footnotesize\rm
    \uppercase\expandafter{\runningtitle}}\hfil\llap{\rm\thepage}\else
      \uppercase\expandafter{\runningname}}\hfil\fi\fi}
\hfil\llap{\rm\thepage}\else
\def\thinskip{\hskip .16667em }
\gdef\references{\catcode`|=\active \catcode`\!=\active
  \def\!{\char`\!}%
  \def|{\bgroup\sc \let|=\egroup}     % Small Caps with |
  \def!{\bgroup\it \let!=\egroup}     % Slant with !
  \def\<##1>{{\bf##1}\futurelet\next\number@ptional}  % Bold Face with \<#>
  \def\number@ptional{\ifx\next(\def\@temp{\n@mber}\else
    \ifx\next:\def\@temp{\p@ges}\else\def\@temp{}\fi\fi \@temp}%
  \def\n@mber(##1){\thinspace(##1)\futurelet\next\page@ptional}%
  \def\page@ptional{\ifx\next:\def\@temp{\p@ges}\else
    \def\@temp{}\fi \@temp}%
  \def\p@ges:{\thinspace:\penalty-20\thinskip}%
  \frenchspacing}}
\def\authorbar{$\vcenter{\hbox{\vrule width3em height.4pt}}\,$}
\def\gnulog 1e{\futurelet\next\gnul@g}
\def\gnul@g{\ifx\next+\let\temp\gnul@gp\else\let\temp\gnul@gm\fi\temp}
\def\gnul@gp+{\afterassignment\gnul@gg\count10=}
\def\gnul@gm-{\afterassignment\gnul@gg-\count10=}
\def\gnul@gg{\the\count10}
\def\eqnarray{\stepcounter{equation}\let\@currentlabel=\theequation
  \global\@eqnswtrue
  \global\@eqcnt\z@\tabskip\@centering\let\\=\@eqncr
  $$\halign to \displaywidth\bgroup\@eqnsel\hskip\@centering
    $\displaystyle\tabskip\z@{##}$&\global\@eqcnt\@ne 
    ${{}##{}}$&\global\@eqcnt\tw@$\displaystyle\tabskip\z@{##}$\hfil
    \tabskip\@centering&\llap{##}\tabskip\z@\cr}
\font\eightex=cmex8
\font\sevenbf=cmbx7
\font\fivebf=cmbx5
\def\xpt{\textfont\z@\tenrm
  \scriptfont\z@\sevrm \scriptscriptfont\z@\fivrm
\textfont\@ne\tenmi \scriptfont\@ne\sevmi \scriptscriptfont\@ne\fivmi
\textfont\tw@\tensy \scriptfont\tw@\sevsy \scriptscriptfont\tw@\fivsy
\textfont\thr@@\tenex \scriptfont\thr@@\eightex \scriptscriptfont\thr@@\eightex
\def\unboldmath{\everymath{}\everydisplay{}\@nomath\unboldmath
          \textfont\@ne\tenmi
          \textfont\tw@\tensy \textfont\lyfam\tenly
          \@boldfalse}\@boldfalse
\def\boldmath{\@ifundefined{tenmib}{\global\font\tenmib\@mbi
   \global\font\tensyb\@mbsy
   \global\font\tenlyb\@lasyb\relax\@addfontinfo\@xpt
   {\def\boldmath{\everymath{\mit}\everydisplay{\mit}\@prtct\@nomathbold
        \textfont\@ne\tenmib \textfont\tw@\tensyb
        \textfont\lyfam\tenlyb \@prtct\@boldtrue}}}{}\@xpt\boldmath}%
\def\prm{\fam\z@\tenrm}%
\def\pit{\fam\itfam\tenit}\textfont\itfam\tenit \scriptfont\itfam\sevit
    \scriptscriptfont\itfam\sevit
\def\psl{\fam\slfam\tensl}\textfont\slfam\tensl
     \scriptfont\slfam\tensl \scriptscriptfont\slfam\tensl
\def\pbf{\fam\bffam\tenbf}\textfont\bffam\tenbf
    \scriptfont\bffam\sevenbf \scriptscriptfont\bffam\fivebf
\def\ptt{\fam\ttfam\tentt}\textfont\ttfam\tentt
    \scriptfont\ttfam\tentt \scriptscriptfont\ttfam\tentt
\def\psf{\fam\sffam\tensf}\textfont\sffam\tensf
    \scriptfont\sffam\tensf \scriptscriptfont\sffam\tensf
\def\psc{\@getfont\psc\scfam\@xpt{\@mcsc}}%
\def\ly{\fam\lyfam\tenly}\textfont\lyfam\tenly
   \scriptfont\lyfam\sevly \scriptscriptfont\lyfam\fivly
\@setstrut \rm}
\def\eqalign#1{\null\,\vcenter{\openup\jot\m@th
  \ialign{\strut\hfil$\displaystyle{##}$&$\displaystyle{{}##}$\hfil
      \crcr#1\crcr}}\,}
\def\eqaligncond#1{\null\,\vcenter{\openup\jot\m@th
  \ialign{\strut\hfil\rm##\quad&\hfil$\displaystyle{##}$&$\displaystyle
      {{}##}$\hfil&\quad##\hfil\crcr#1\crcr}}\,}
\def\doubleeqalign#1{\null\,\vcenter{\openup\jot\m@th
  \ialign{\strut\hfil$\displaystyle{##}$&$\displaystyle{{}##}$\hfil
         &\qquad\hfil$\displaystyle{##}$&$\displaystyle{{}##}$\hfil
      \crcr#1\crcr}}\,}
\font\tenrm=cmr10
\font\teneuf=eufm10 \font\seveneuf=eufm7 \font\fiveeuf=eufm5
\font\nineeuf=eufm9 \font\sixeuf=eufm6
\def\tenfraktur{\textfont\frakfam=\teneuf \scriptfont\frakfam=\seveneuf
  \scriptscriptfont\frakfam=\fiveeuf}
\def\frak{\fam\frakfam}
\let\smallfrak=\ninefraktur
\def\R{{\bf R}}                     % the real field
\def\T{{\scriptscriptstyle\rm T}}   % tranpose
\def\vf{{\frak X}}                  % vector fields
\def\geodesic#1{\gamma_{\lower1pt\hbox{$\scriptstyle#1$}}}
\def\phat{{\hat p}}
\def\xhat{{\hat x}}
\def\Thetahat{{\hat\Theta}}
\def\f{{\!f}}
\def\covD#1{\nabla_{\!#1}}          % covariant derivative
\def\D#1{{\nabla\!#1}}              % covariant differential
\def\Dsqr#1{{\nabla^2\!#1}}         % second covariant differential
\def\Xtilde{{\tilde X}}
\def\Ytilde{{\tilde Y}}
\def\Htilde{{\tilde H}}
\def\grad{\mathop{\rm grad}\nolimits}
\def\tr{\mathop{\rm tr}\nolimits}
\def\diag{\mathop{\rm diag}\nolimits}
\def\Riemann(#1,#2){\mathop{R(#1,#2)}}
\def\id{\mathop{\rm id}\nolimits}
\def\Ad{\mathop{\rm Ad}\nolimits}
\def\ad{\mathop{\rm ad}\nolimits}
\def\SO{\mathop{\it SO}\nolimits}
\def\so{\mathop{\frak so}\nolimits}
\def\gl{\mathop{\frak gl}\nolimits}
\mathchardef\rdot="0201             % ordinary raised dot
\def\Ball{{B_\epsilon(\phat)}}
\def\Diff{{\mit\Delta}}
\def\vrem{{\mit\Xi}}
\def\hot{{\rm h.o.t.}}
\def\nupushf{{\nu_{\mskip-1.5mu*}\mskip-2mu f}}
\def\lyapunov{\tr\Theta^\T Q\Theta N}
\def\adjarrow{\mathop{\vcenter{\hbox{\dimen1=40pt \dimen3=1.5pt
  \raise\dimen3\rlap{\hbox to\dimen1{\rightarrowfill}}\lower\dimen3
  \hbox to\dimen1{\leftarrowfill}}}}}
\def\smallchoice#1{\mathchoice{{\textstyle#1}}{{\scriptstyle#1}}%
  {{\scriptscriptstyle#1}}{{\scriptscriptstyle#1}}}
\def\oneoverx#1{{1\over#1}}
\def\half{{\smallchoice{\oneoverx2}}}
\def\third{{\smallchoice{\oneoverx3}}}
\def\quarter{{\smallchoice{\oneoverx4}}}
\begin{document}

\title{Optimization Techniques on\\Riemannian Manifolds}
\author{Steven T. Smith}
\affiliation{Harvard University\\Division of Applied
Sciences\\Cambridge, Massachusetts 02138}
\maketitle

\begin{abstract}
The techniques and analysis presented in this paper provide new
methods to solve optimization problems posed on Riemannian manifolds.
A new point of view is offered for the solution of constrained
optimization problems.  Some classical optimization techniques on
Euclidean space are generalized to Riemannian manifolds.  Several
algorithms are presented and their convergence properties are analyzed
employing the Riemannian structure of the manifold.  Specifically, two
apparently new algorithms, which can be thought of as Newton's method
and the conjugate gradient method on Riemannian manifolds, are
presented and shown to possess, respectively, quadratic and
superlinear convergence.  Examples of each method on certain
Riemannian manifolds are given with the results of numerical
experiments.  Rayleigh's quotient defined on the sphere is one
example.  It is shown that Newton's method applied to this function
converges cubically, and that the Rayleigh quotient iteration is an
efficient approximation of Newton's method.  The Riemannian version of
the conjugate gradient method applied to this function gives a new
algorithm for finding the eigenvectors corresponding to the extreme
eigenvalues of a symmetric matrix.  Another example arises from
extremizing the function $\lyapunov$ on the special orthogonal group.
In a similar example, it is shown that Newton's method applied to the
sum of the squares of the off-diagonal entries of a symmetric matrix
converges cubically. \parfillskip=0pt
\end{abstract}

\begin{keywords} Optimization, constrained optimization, Riemannian
manifolds, Lie groups, homogeneous spaces, steepest descent, Newton's
method, conjugate gradient method, eigenvalue problem, Rayleigh's
quotient, Rayleigh quotient iteration, Jacobi methods, numerical
methods.
\end{keywords}

\section{Introduction}

The preponderance of optimization techniques address problems posed on
Euclidean spaces.  Indeed, several fundamental algorithms have arisen
from the desire to compute the minimum of quadratic forms on Euclidean
space.  However, many optimization problems are posed on non-Euclidean
spaces.  For example, finding the largest eigenvalue of a symmetric
matrix may be posed as the maximization of Rayleigh's quotient defined
on the sphere.  Optimization problems subject to nonlinear
differentiable equality constraints on Euclidean space also lie within
this category.  Many optimization problems share with these examples
the structure of a differentiable manifold endowed with a Riemannian
metric.  This is the subject of this paper: the extremization of
functions defined on Riemannian manifolds.

The minimization of functions on a Riemannian manifold is, at least
locally, equivalent to the smoothly constrained optimization problem
on a Euclidean space, because every $C^\infty$ Riemannian manifold can
be isometrically imbedded in some Euclidean
space~\cite[Vol.~V]{Spivak}.  However, the dimension of the Euclidean
space may be larger than the dimension of the manifold; practical and
aesthetic considerations suggest that one try to exploit the intrinsic
structure of the manifold.  Elements of this spirit may be found
throughout the field of numerical methods, such as the emphasis on
unitary (norm preserving) transformations in numerical linear
algebra~\cite{GVL}, or the use of feasible direction
methods~\cite{Fletcher,GillMurray,Sargent}.

An intrinsic approach leads one from the extrinsic idea of vector
addition to the exponential map and parallel translation, from
minimization along lines to minimization along geodesics, and from
partial differentiation to covariant differentiation.  The computation
of geodesics, parallel translation, and covariant derivatives can be
quite expensive. For an $n$-dimensional manifold, the computation of
geodesics and parallel translation requires the solution of a system
of $2n$ nonlinear and $n$ linear ordinary differential equations.
Nevertheless, many optimization problems are posed on manifolds that
have an underlying algebraic structure that may be exploited to
greatly reduce the complexity of these computations.  For example, on
a real compact semisimple Lie group endowed with its natural
Riemannian metric, geodesics and parallel translation may be computed
via matrix exponentiation~\cite{Helgason}. Several algorithms are
available to perform this computation~\cite{GVL,nineteendubious}.
This algebraic structure may be found in the problems posed by
Brockett~\cite{Brockett:match,Brockett:sort,Brockett:grad}, Bloch
et~al.~\cite{BBR1,BBR2}, Smith~\cite{Me}, Faybusovich~\cite{Leonid},
Lagarias~\cite{Lagarias}, Chu et~al.~\cite{Chu:sphere,Chu:grad},
Perkins et~al.~\cite{balreal}, and Helmke~\cite{Uwe}.  This approach
is also applicable if the manifold can be identified with a symmetric
space or, excepting parallel translation, a reductive homogeneous
space~\cite{KobayshiandNomizu,Nomizu}.  Perhaps the simplest
nontrivial example is the sphere, where geodesics and parallel
translation can be computed at low cost with trigonometric functions
and vector addition.  Furthermore, Brown and
Bartholomew-Biggs~\cite{Bcubed} show that in some cases function
minimization by following the solution of a system of ordinary
differential equations can be implemented such that it is competitive
with conventional techniques.

The outline of the paper is as follows. In Section~\ref{sec:prelim},
the optimization problem is posed and conventions to be held
throughout the paper are established.  The method of steepest descent
on a Riemannian manifold is described in Section~\ref{sec:steepdesc}.
To fix ideas, a proof of linear convergence is given.  The examples of
Rayleigh's quotient on the sphere and the function $\lyapunov$ on the
special orthogonal group are presented.  In Section~\ref{sec:newton},
Newton's method on a Riemannian manifold is derived.  As in Euclidean
space, this algorithm may be used to compute the extrema of
differentiable functions.  It is proved that this method converges
quadratically.  The example of Rayleigh's quotient is continued, and
it is shown that Newton's method applied to this function converges
cubically, and is approximated by the Rayleigh quotient iteration.
The example considering $\lyapunov$ is continued.  In a related
example, it is shown that Newton's method applied to the sum of the
squares of the off-diagonal elements of a symmetric matrix converges
cubically.  This provides an example of a cubically convergent
Jacobi-like method.  The conjugate gradient method is presented in
Section~\ref{sec:conjgrad} with a proof of superlinear convergence.
This technique is shown to provide an effective algorithm for
computing the extreme eigenvalues of a symmetric matrix.  The
conjugate gradient method is applied to the function $\lyapunov$.

\section{Preliminaries}
\label{sec:prelim}

This paper is concerned with the following problem.

\begin{problem}\label{mainproblem}\ignorespaces Let $M$ be a complete
Riemannian manifold, and $f$ a $C^\infty$ function on~$M$.  Compute
$$\min_{p\in M}f(p).$$
\end{problem}

There are many well-known algorithms for solving this problem in the
case where $M$ is a Euclidean space.  This paper generalizes several
of these algorithms to the case of complete Riemannian manifolds by
replacing the Euclidean notions of straight lines and ordinary
differentiation with geodesics and covariant differentiation.  These
concepts are reviewed in the following paragraphs.  We follow
Helgason's~\cite{Helgason} and Spivak's~\cite{Spivak} treatments of
covariant differentiation, the exponential map, and parallel
translation.  Details may be found in these references.

Let $M$ be a complete $n$-dimensional Riemannian manifold with
Riemannian structure $g$ and corresponding Levi-Civita connection
$\nabla$.  Denote the tangent plane at~$p$ in~$M$ by~$T_p$ or $T_pM$.
For every $p$ in~$M$, the Riemannian structure $g$ provides an inner
product on~$T_p$ given by the nondegenerate symmetric bilinear form
$g_p\colon T_p\times T_p\to\R$.  The notation $\(X,Y\)=g_p(X,Y)$ and
$\|X\|=g_p(X,X)^{1/2}$, where $X$, $Y\in T_p$, is often used.  The
distance between two points $p$ and $q$ in~$M$ is denoted by $d(p,q)$.
The gradient of a real-valued $C^\infty$ function $f$ on~$M$ at $p$,
denoted by $(\grad\f)_p$, is the unique vector in~$T_p$ such that
$df_p(X)=\((\grad\f)_p,X\)$ for~all $X$ in~$T_p$.

Denote the set of $C^\infty$ functions on~$M$ by~$C^\infty(M)$ and the
set of $C^\infty$ vector fields on~$M$ by~$\vf(M)$.  An affine
connection on~$M$ is a function~$\nabla$ which assigns to each vector
field $X\in\vf(M)$ an $\R$-linear map $\covD{X}\colon\vf(M)\to\vf(M)$
which satisfies $${\rm(i)}\quad\covD{f\!X+gY}=f\covD{X}+g\covD{Y},\qquad
{\rm(ii)}\quad\covD{X}(fY) =f\covD{X}Y+(X\f)Y,$$ for~all $f\!$, $g\in
C^\infty(M)$, $X$, $Y\in\vf(M)$.  The map $\covD{X}$ may be applied to
tensors of arbitrary type.  Let $\nabla$ be an affine connection
on~$M$ and $X\in\vf(M)$.  Then there exists a unique $\R$-linear map
$A\mapsto\covD{X}A$ of $C^\infty$ tensor fields into $C^\infty$ tensor
fields which satisfies
$$\vbox{\halign{\rm\hfil#\quad&#\hfil\qquad\qquad&\rm\hfil#\quad&#\hfil\cr
(i)&$\covD{X}f=X\f\!$,&(iv)&$\covD{X}$ preserves the type of tensors,\cr
(ii)&$\covD{X}Y$ is given by~$\nabla$,&(v)&$\covD{X}$ commutes with
contractions,\cr (iii)&\multispan3$\covD{X}$ is a derivation:
$\covD{X}(A\otimes B) =\covD{X}A\otimes B
+A\otimes\covD{X}B$,\hfil\cr}}$$ where $f\in C^\infty(M)$,
$Y\in\vf(M)$, and $A$, $B$ are $C^\infty$ tensor fields.  If $A$ is of
type~$(k,l)$, then $\covD{X}A$, called the covariant derivative of~$A$
along~$X$, is of type~$(k,l)$, and $\D A\colon X\mapsto\covD{X}A$,
called the covariant differential of~$A$, is of type~$(k,l+1)$.

Let $M$ be a differentiable manifold with affine connection~$\nabla$.
Let $\gamma\colon I\to M$ be a smooth curve with tangent vectors
$X(t)=\dot\gamma(t)$, where $I\subset\R$ is an open interval.  The
curve $\gamma$ is called a geodesic if $\covD{X}X=0$ for~all $t\in I$.
Let $Y(t)\in T_{\gamma(t)}$ ($t\in I$) be a smooth family of tangent
vectors defined along $\gamma$.  The family $Y(t)$ is said to be
parallel along $\gamma$ if $\covD{X}Y=0$ for~all $t\in I$.

For every $p$ in~$M$ and $X\ne0$ in~$T_p$, there exists a unique
geodesic $t\mapsto\geodesic{X}(t)$ such that $\geodesic{X}(0)=p$ and
$\dot\geodesic{X}(0)=X$.  We define the exponential map $\exp_p\colon
T_p\to M$ by $\exp_p(X)=\geodesic{X}(1)$ for~all $X\in T_p$ such that
$1$ is in the domain of~$\geodesic{X}$.  Oftentimes the map $\exp_p$
will be denoted by ``$\exp$'' when the choice of tangent plane is
clear, and $\geodesic{X}(t)$ will be denoted by $\exp tX$.  A
neighborhood $N_p$ of~$p$ in~$M$ is a normal neighborhood if $N_p=\exp
N_0$, where $N_0$ is a star-shaped neighborhood of the origin in~$T_p$
and $\exp$ maps $N_0$ diffeomorphically onto~$N_p$.  Normal
neighborhoods always exist.

Given a curve $\gamma\colon I\to M$ such that $\gamma(0)=p$, for each
$Y\in T_p$ there exists a unique family $Y(t)\in T_{\gamma(t)}$ ($t\in
I$) of tangent vectors parallel along $\gamma$ such that $Y(0)=Y$.  If
$\gamma$ joins the points $p$ and~$\gamma(\alpha)=q$, the parallelism
along $\gamma$ induces an isomorphism $\tau_{pq}\colon T_p\to T_q$
defined by $\tau_{pq}Y=Y(\alpha)$.

Let $M$ be a manifold with an affine connection $\nabla$, and $N_p$ a
normal neighborhood of~$p\in M$.  Define the vector field $\Xtilde$
on~$N_p$ adapted to the tangent vector $X$ in~$T_p$ by putting
$\Xtilde_q=\tau_{pq}X$, the parallel translation of $X$ along the
unique geodesic segment joining $p$ and $q$.

Given a Riemannian structure $g$ on~$M$, there exists a unique affine
connection $\nabla$ on~$M$, called the Levi-Civita connection, which
for~all $X$, $Y\in\vf(M)$ satisfies
$$\vbox{\halign{\rm\hfil#\quad&$\displaystyle#$\hfil\qquad&#\hfil\cr
(i)&\covD{X}Y-\covD{Y}X=[X,Y]&($\nabla$ is symmetric or
torsion-free),\cr (ii)&\nabla g=0&(parallel translation is an
isometry).\cr}}$$ Length minimizing curves on $M$ are geodesics of the
Levi-Civita connection.  We shall use this connection throughout the
paper.

Unless otherwise specified, all manifolds, vector fields, and
functions are assumed to be smooth.  When considering a function $f$
to be minimized, the assumption that $f$ is differentiable of
class~$C^\infty$ can be relaxed throughout the paper, but $f$ must be
continuously differentiable at least beyond the derivatives that
appear.  As the results of this paper are local ones, the assumption
that $M$ be complete may also be relaxed in certain instances.

We will use the the following definitions to compare the convergence
rates of various algorithms.

\begin{definition} Let $\{p_i\}$ be a Cauchy sequence in~$M$ that
converges to $\phat$.  (i)~The sequence $\{p_i\}$ is said to converge
(at least) {\it linearly\/} if there exists an integer $N$ and a
constant $\theta\in[0,1)$ such that $d(p_{i+1},\phat)\le\theta
d(p_i,\phat)$ for~all $i\ge N$.  (ii)~The sequence $\{p_i\}$ is said
to converge (at least) {\it quadratically\/} if there exists an
integer $N$ and a constant $\theta\ge0$ such that
$d(p_{i+1},\phat)\le\theta d^2(p_i,\phat)$ for~all $i\ge N$.
(iii)~The sequence $\{p_i\}$ is said to converge (at least) {\it
cubically\/} if there exists an integer $N$ and a constant
$\theta\ge0$ such that $d(p_{i+1},\phat)\le \theta d^3(p_i,\phat)$
for~all $i\ge{N}$. (iv)~The sequence $\{p_i\}$ is said to converge
{\it superlinearly\/} if it converges faster than any sequence that
converges linearly.
\end{definition}

\section{Steepest descent on Riemannian manifolds}
\label{sec:steepdesc}

The method of steepest descent on a Riemannian manifold is
conceptually identical to the method of steepest descent on Euclidean
space.  Each iteration involves a gradient computation and
minimization along the geodesic determined by the gradient.
Fletcher~\cite{Fletcher},
Botsaris~\cite{Botsaris:grad,Botsaris:class,Botsaris:geod}, and
Luenberger~\cite{Luenberger} describe this algorithm in Euclidean
space.  Gill and Murray~\cite{GillMurray} and Sargent~\cite{Sargent}
apply this technique in the presence of constraints.  In this section
we restate the method of steepest descent described in the literature
and provide an alternative formalism that will be useful in the
development of Newton's method and the conjugate gradient method on
Riemannian manifolds.

\begin{algorithm}[The method of steepest
descent]\label{al:steepman}\ignorespaces Let $M$ be a complete
Riemannian manifold with Riemannian structure $g$ and Levi-Civita
connection $\nabla$, and let $f\in C^\infty(M)$.
\begin{steps}
\step[0.] Select $p_0\in M$, compute
     $G_0=-(\grad\f)_{p_0}$, and set $i=0$.
\step[1.] Compute $\lambda_i$ such that
     $$f(\exp_{p_i}\lambda_iG_i)\le f(\exp_{p_i}\lambda G_i)$$ for all
     $\lambda\ge0$.
\step[2.] Set $$\eqalign{p_{i+1} &=\exp_{p_i}\lambda_iG_i,\cr
     G_{i+1} &=-(\grad\f)_{p_{i+1}},\cr}$$ increment $i$, and go~to Step~1.
\end{steps}
\end{algorithm}

It is easy to verify that $\(G_{i+1},\tau G_i\)=0$, for $i\ge0$, where
$\tau$ is the parallelism with respect to the geodesic from $p_i$ to
$p_{i+1}$.  By assumption, the function $\lambda\mapsto f(\exp\lambda
G_i)$ is minimized at $\lambda_i$. Therefore, we have $0
={(d/dt)|_{t=0}}\penalty0{f(\exp(\lambda_i+t)G_i)} =df_{p_{i+1}}(\tau
G_i) =\((\grad\f)_{p_{i+1}},\tau G_i\)$.  Thus the method of steepest
descent on a Riemannian manifold has the same deficiency as its
counterpart on a Euclidean space, i.e., it makes a ninety degree turn
at every step.

The convergence of Algorithm~\ref{al:steepman} is linear.  To prove
this fact, we will make use of a standard theorem of the calculus,
expressed in differential geometric language.  The covariant
derivative $\covD X\f$ of~$f$ along $X$ is defined to be $X\f$.  For
$k=1$, $2$,~\dots, define $\covD{X}^k\f
=\covD{X}\circ\cdots\circ\covD{X}\f$ ($k$~times), and let
$\covD{X}^0\f=f$.

\begin{remark}[Taylor's formula]\label{rem:taylorf}\ignorespaces Let
$M$ be a manifold with an affine connection $\nabla$, $N_p$ a normal
neighborhood of~$p\in M$, the vector field $\Xtilde$ on~$N_p$ adapted
to~$X$ in~$T_p$, and $f$ a $C^\infty$ function on~$M$.  Then there
exists an $\epsilon>0$ such that for every $\lambda\in[0,\epsilon)$
\begin{equation}\label{eq:taylorf}
\eqalign{f(\exp_p \lambda X) &=f(p) +\lambda(\covD\Xtilde\f)(p)
+\cdots+{\lambda^{n-1}\over(n-1)!}(\covD\Xtilde^{n-1}\f)(p)\cr
&\quad{}+{\lambda^n\over(n-1)!}\int_0^1(1-t)^{n-1}
(\covD\Xtilde^n\f)(\exp_p t\lambda X)\,dt.\cr}
\end{equation}
\end{remark}

\begin{proof} Let $N_0$ be a star-shaped neighborhood of $0\in
T_p$ such that $N_p=\exp N_0$.  There exists $\epsilon>0$ such that
$\lambda X\in N_0$ for~all $\lambda\in[0,\epsilon)$.  The map
$\lambda\mapsto f(\exp\lambda X)$ is a real $C^\infty$ function on
$[0,\epsilon)$ with derivative $(\covD\Xtilde\f)(\exp\lambda X)$.  The
statement follows by repeated integration by parts.
\end{proof}

% Note that if $M$ is an analytic manifold with an analytic affine
% connection $\nabla$, the representation $$f(\exp_p\lambda X)
% =\sum_{k=0}^\infty{\lambda^k\over k!}(\covD\Xtilde^k\f)(p)$$ is valid
% for~all $X\in T_p$ and all $\lambda\in[0,\epsilon)$.
% Helgason~\cite{Helgason} provides a proof.

The following special cases of Remark~\ref{rem:taylorf} will be
particularly useful.  When $n=2$, Eq.~(\ref{eq:taylorf}) yields
\begin{equation} \label{eq:taylorf2}
\eqalign{f(\exp_p\lambda X) &=f(p) +\lambda(\covD\Xtilde\f)(p)
+\lambda^2\int_0^1(1-t) (\covD\Xtilde^2\f)(\exp_pt\lambda X)\,dt.\cr}
\end{equation} Furthermore, when $n=1$, Eq.~(\ref{eq:taylorf})
applied to the function $\Xtilde\f=\covD\Xtilde\f$ yields
\begin{equation} \label{eq:dtaylorf1} (\Xtilde\f)(\exp_p\lambda X)
=(\Xtilde\f)(p) +\lambda\int_0^1 (\covD\Xtilde^2\f)(\exp_p t\lambda
X)\,dt. \end{equation}

The convergence proofs require a characterization of the second order
terms of~$f$ near a critical point.  Consider the second covariant
differential $\nabla\nabla\f=\nabla^2\f$ of a smooth function $f\colon
M\to\R$.  If $(U,x^1,\ldots,x^n)$ is a coordinate chart on~$M$, then
at~$p\in U$ this $(0,2)$ tensor takes the form
\begin{equation}\label{eq:d2f} (\nabla^2\f)_p =\sum_{i,j}
\biggl(\Bigl({\partial^2\f\over\partial x^i\partial x^j}\Bigr)_p
-\sum_k \Gamma_{ji}^k \Bigl({\partial\f\over\partial
x^k}\Bigr)_p\biggr)\, dx^i\otimes dx^j \end{equation} where
$\Gamma_{ij}^k$ are the Christoffel symbols at~$p$.  If $\phat$ in~$U$
is a critical point of~$f\!$, then $(\partial\f/\partial x^k)_\phat=0$,
$k=1$,~\dots, $n$. Therefore $(\nabla^2\f)_\phat=(d^2\f)_\phat,$ where
$(d^2\f)_\phat$ is the Hessian of~$f$ at the critical point $\phat$.
Furthermore, for $p\in M$, $X$, $Y\in T_p$, and $\Xtilde$ and
$\Ytilde$ vector fields adapted to $X$ and $Y$, respectively, on a
normal neighborhood $N_p$ of~$p$, we have $(\nabla^2\f)
(\Xtilde,\Ytilde) =\covD\Ytilde\covD\Xtilde\f$ on~$N_p$.  Therefore
the coefficient of the second term of the Taylor expansion of $f(\exp
tX)$ is $(\covD\Xtilde^2\f)_p =(\nabla^2\f)_p(X,X)$.  Note that the
bilinear form $(\nabla^2\f)_p$ on $T_p\times T_p$ is symmetric if and
only if $\nabla$ is symmetric, which true of the Levi-Civita
connection by definition.

\begin{theorem}\label{th:linearconv}\ignorespaces Let $M$ be a complete
Riemannian manifold with Riemannian structure $g$ and Levi-Civita
connection $\nabla$.  Let $f\in C^\infty(M)$ have a nondegenerate
critical point at $\phat$ such that the Hessian $(d^2\f)_\phat$ is
positive definite.  Let $p_i$ be a sequence of points in $M$
converging to~$\phat$ and $H_i\in T_{p_i}$ a sequence of tangent
vectors such that $$\eqaligncond{(i)& p_{i+1}
&=\smash{\exp_{p_i}}\lambda_iH_i &for $i=0$, $1$,~\dots,\cr (ii)&
\(-(\grad\f)_{p_i},H_i\) &\ge c\,\|(\grad\f)_{p_i}\|\>\|H_i\| &for
$c\in(0,1]$,\cr}$$ where $\lambda_i$ is chosen such that
$f(\exp\lambda_iH_i)\le f(\exp\lambda H_i)$ for all $\lambda\ge0$.
Then there exists a constant $E$ and a $\theta\in[0,1)$ such that
for~all $i=0$, $1$,~\dots, $$d(p_i,\phat)\le E\theta^i.$$
\end{theorem}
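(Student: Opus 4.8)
The plan is to exploit the fact that near the nondegenerate critical point $\phat$ — where $(\nabla^2\f)_\phat=(d^2\f)_\phat$ is positive definite — the function $f$ is modelled by a positive definite quadratic form, so that the classical convergence estimates for steepest descent go through once straight lines, ordinary differentiation and the Hessian are replaced by geodesics, covariant differentiation and $\nabla^2\f$. Throughout I would work inside a normal ball $\Ball$ on which $\exp_\phat$ is a diffeomorphism from a star-shaped neighbourhood of the origin, so that every $q\in\Ball$ is joined to $\phat$ by a unique minimizing geodesic $t\mapsto\exp_\phat(tv)$, $0\le t\le1$, with $\|v\|=d(q,\phat)$.

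First I would build the quadratic model. By continuity of $\nabla^2\f$ and compactness, after shrinking $\Ball$ there are constants $0<m\le M$ with $m\|Z\|^2\le(\nabla^2\f)_q(Z,Z)\le M\|Z\|^2$ for all $q\in\Ball$ and $Z\in T_q$. Applying~(\ref{eq:taylorf2}) along the radial geodesic from $\phat$ to $q$, with $\Xtilde$ adapted to the unit vector $X=v/\|v\|$ (so $\|\Xtilde\|\equiv1$, parallel translation being an isometry) and $(\grad\f)_\phat=0$, gives
$$\half m\,d(q,\phat)^2\ \le\ f(q)-f(\phat)\ \le\ \half M\,d(q,\phat)^2,$$
while applying~(\ref{eq:dtaylorf1}) to $\Xtilde\f$ with the unit $X$ for which $\Xtilde_q=(\grad\f)_q/\|(\grad\f)_q\|$, and using $(\Xtilde\f)(\phat)=0$, gives in the same way
$$m\,d(q,\phat)\ \le\ \|(\grad\f)_q\|\ \le\ M\,d(q,\phat).$$
Since $\phat$ is an isolated critical point we may assume $(\grad\f)_{p_i}\ne0$ for every $i$.

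Next I would run the line-search argument. For $i$ large enough that $p_i\in\Ball$, write $\gamma_i(\lambda)=\exp_{p_i}\lambda H_i$; by~(\ref{eq:taylorf2}), hypothesis~(ii) in the form $(\Htilde_i\f)(p_i)=\langle(\grad\f)_{p_i},H_i\rangle\le-c\,\|(\grad\f)_{p_i}\|\,\|H_i\|$, and the upper Hessian bound,
$$f(\exp_{p_i}\lambda H_i)\ \le\ f(p_i)-c\lambda\,\|(\grad\f)_{p_i}\|\,\|H_i\|+\half M\lambda^2\|H_i\|^2$$
for $\lambda$ with $\gamma_i([0,\lambda])\subset\Ball$. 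The right side is minimized at $\mu_i=c\|(\grad\f)_{p_i}\|/(M\|H_i\|)$, whose geodesic segment has length $\mu_i\|H_i\|=c\|(\grad\f)_{p_i}\|/M\to0$, hence lies in $\Ball$ once $p_i$ is close to $\phat$; since $\lambda_i$ minimizes $\lambda\mapsto f(\exp\lambda H_i)$ this yields $f(p_{i+1})-f(\phat)\le\bigl(f(p_i)-f(\phat)\bigr)-c^2\|(\grad\f)_{p_i}\|^2/(2M)$. Inserting the first-step bounds $\|(\grad\f)_{p_i}\|^2\ge m^2 d(p_i,\phat)^2\ge(2m^2/M)\bigl(f(p_i)-f(\phat)\bigr)$ gives $f(p_{i+1})-f(\phat)\le\theta^2\bigl(f(p_i)-f(\phat)\bigr)$ with $\theta^2=1-c^2m^2/M^2\in[0,1)$. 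Iterating from the first index $N$ with $p_N\in\Ball$, and converting back via $\half m\,d(p_i,\phat)^2\le f(p_i)-f(\phat)$, gives $d(p_i,\phat)\le E\theta^i$ for $i\ge N$; enlarging $E$ absorbs the finitely many earlier terms.

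The only real work is the bookkeeping in the first two steps: upgrading the pointwise positive-definiteness of $(d^2\f)_\phat$ to uniform two-sided bounds on $\Ball$, and — the genuinely delicate point — verifying that every trial geodesic segment $\gamma_i([0,\mu_i])$, hence every iterate $p_{i+1}$, stays inside $\Ball$, which is where completeness of $M$ and the hypothesis $p_i\to\phat$ are actually used (the latter letting us discard an initial segment of the sequence). Everything else is the textbook quadratic-model estimate for steepest descent, rewritten in geodesic language.
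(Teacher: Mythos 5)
Your proposal follows the paper's own route essentially line for line: Polak's quadratic-model estimate for steepest descent transplanted to geodesics, using the uniform two-sided bounds on $\nabla^2\f$ near $\phat$, the two Taylor formulas (\ref{eq:taylorf2}) and (\ref{eq:dtaylorf1}), the trial step $\mu_i=c\|(\grad\f)_{p_i}\|/(M\|H_i\|)$, the contraction factor $1-c^2m^2/M^2$ for $f(p_i)-f(\phat)$, and the final conversion back to $d(p_i,\phat)$. Your explicit care about keeping the trial segments and the iterates inside the normal ball is a point the paper leaves implicit, and is a genuine improvement in bookkeeping.

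The one step that does not work as written is your derivation of the gradient bound. Eq.~(\ref{eq:dtaylorf1}) expands $\Xtilde\f$ along the geodesic $t\mapsto\exp_\phat(t\lambda X)$ in the \emph{same} direction $X$ to which the field $\Xtilde$ is adapted; you cannot simultaneously require the geodesic to be the radial one ending at $q$ and require $\Xtilde_q$ to equal $(\grad\f)_q/\|(\grad\f)_q\|$, since these are different directions in general, and with your choice of $X$ the formula evaluates $\Xtilde\f$ at $\exp_\phat\lambda X\ne q$. The repair is exactly the paper's argument: keep $X$ radial (so $\exp_\phat X=q$), note that $(\Xtilde\f)(q)=df_q(\tau X)=\((\grad\f)_q,\tau X\)$, and combine the lower Hessian bound $m\|X\|^2\le\int_0^1(\covD{\Xtilde}^2\f)(\exp_\phat tX)\,dt=(\Xtilde\f)(q)$ with the Cauchy--Schwarz estimate $\((\grad\f)_q,\tau X\)\le\|(\grad\f)_q\|\,\|X\|$, which yields the only inequality you actually use, namely $\|(\grad\f)_q\|\ge m\,d(q,\phat)$; the claimed upper bound $\|(\grad\f)_q\|\le M\,d(q,\phat)$ is never needed. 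With that one substitution your proof is complete and coincides with the paper's.
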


\begin{proof} The proof is a generalization of the one given in
Polak~\cite[p.~242ff\/]{Polak} for the method of steepest descent on
Euclidean space.

The existence of a convergent sequence is guaranteed by the smoothness
of~$f$.  If $p_j=\phat$ for some integer $j$, the assertion becomes
trivial; assume otherwise.  By the smoothness of~$f\!$, there exists an
open neighborhood $U$ of~$\phat$ such that $(\nabla^2\f)_p$ is
positive definite for~all $p\in U$. Therefore, there exist constants
$k>0$ and $K\ge k>0$ such that for all $X\in T_p$ and all $p\in U$,
\begin{equation} \label{eq:d2fbounds} k\|X\|^2\le
(\nabla^2\f)_p(X,X)\le K\|X\|^2. \end{equation}

Define $X_i\in T_{\phat}$ by the relations $\exp X_i=p_i$, $i=0$, $1$,
\dots\spacefactor=3000\relax\space By assumption, $df_\phat=0$ and
from Eq.~(\ref{eq:taylorf2}), we have \begin{equation}
\label{eq:taylorfsp} f(p_i)-f(\phat) =\int_0^1
(1-t)(\covD{\Xtilde_i}^2f)(\exp_\phat tX_i)\,dt.\end{equation}
Combining this equality with the inequalities of (\ref{eq:d2fbounds})
yields \begin{equation} \label{eq:fbounds} \half kd^2(p_i,\phat)\le
f(p_i)-f(\phat)\le \half Kd^2(p_i,\phat). \end{equation} Similarly, we
have by Eq.~(\ref{eq:dtaylorf1}) $$(\Xtilde_if)(p_i) =\int_0^1
(\covD{\Xtilde_i}^2f)(\exp_\phat tX_i)\,dt.$$ Next, use
(\ref{eq:taylorfsp}) with Schwarz's inequality and the first
inequality of~(\ref{eq:fbounds}) to obtain
$$\eqalign{kd^2(p_i,\phat)=k\|X_i\|^2 &\le\int_0^1
(\covD{\Xtilde_i}^2f)(\exp_\phat tX_i)\,dt =(\Xtilde_if)(p_i)\cr
&=df_{p_i}\bigl((\Xtilde_i)_{p_i}\bigr) =df_{p_i}(\tau X_i)
=\((\grad\f)_{p_i},\tau X_i\)\cr &\le
\|(\grad\f)_{p_i}\|\>\|\tau X_i\| =\|(\grad\f)_{p_i}\|\>
d(p_i,\phat).\cr}$$ Therefore, \begin{equation} \label{eq:gradlb}
\|(\grad\f)_{p_i}\|\ge kd(p_i,\phat).\end{equation}

Define the function $\Diff\colon T_p\times\R\to\R$ by the equation
$\Diff(X,\lambda)=f(\exp_p\lambda X)-f(p)$. By
Eq.~(\ref{eq:taylorf2}), the second order Taylor formula, we have
$$\Diff(H_i,\lambda) =\lambda(\Htilde_if)(p_i) +\half\lambda^2\int_0^1
(1-t)(\covD{\Htilde_i}^2f)(\exp_{p_i}\lambda H_i)\,dt.$$ Using
assumption~(ii) of the theorem along with (\ref{eq:d2fbounds}) we
establish for $\lambda\ge0$ \begin{equation}\label{eq:diffbounds}
\Diff(H_i,\lambda)\le -\lambda c\|(\grad\f)_{p_i}\|\>\|H_i\|
+\half\lambda^2K\|H_i\|^2. \end{equation}

We may now compute an upper bound for the rate of linear convergence
$\theta$.  By assumption~(i) of the theorem, $\lambda$ must be chosen
to minimize the right hand side of~(\ref{eq:diffbounds}). This
corresponds to choosing $\lambda=c\|(\grad\f)_{p_i}\|\big/K\|H_i\|$.
A computation reveals that $$\Diff(H_i,\lambda_i)\le
-{c^2\over2K}\|(\grad\f)_{p_i}\|^2.$$ Applying (\ref{eq:fbounds}) and
(\ref{eq:gradlb}) to this inequality and rearranging terms yields
\begin{equation} \label{eq:fconv} f(p_{i+1})-f(\phat)\le
\theta\bigl(f(p_i)-f(\phat)\bigr), \end{equation} where
$\theta=\bigl(1-(ck/K)^2\bigr)$.  By assumption, $c\in(0,1]$ and
$0<k\le K$, therefore $\theta\in[0,1)$.  (Note that Schwarz's
inequality bounds $c$ below unity.)  From (\ref{eq:fconv}) it is seen
that $\bigl(f(p_i)-f(\phat)\bigr)\le E\theta^i$ where
$E=\bigl(f(p_0)-f(\phat)\bigr)$.  From (\ref{eq:fbounds}) we conclude
that for $i=0$, $1$,~\dots, \begin{equation} \label{eq:linearconv}
d(p_i,\phat)\le \sqrt{2E\over k}\bigl(\sqrt\theta\,\bigr)^i.\tombstone
\end{equation}
\end{proof}

\begin{corollary} If Algorithm~\ref{al:steepman} converges to a local
minimum, it converges linearly.
\end{corollary}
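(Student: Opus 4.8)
The plan is to deduce this corollary directly from Theorem~\ref{th:linearconv}, of which Algorithm~\ref{al:steepman} is precisely the special case in which the search directions are the negative gradients. In the notation of the theorem, take $H_i=G_i=-(\grad\f)_{p_i}$, and suppose the iterates of Algorithm~\ref{al:steepman} converge to a local minimum~$\phat$. Since $\grad\f$ is continuous, $\phat$ is a critical point of~$f$; reading ``local minimum'' as \emph{nondegenerate} local minimum (which the rate statement requires), the Hessian $(d^2\f)_\phat$ is positive semidefinite because $\phat$ locally minimizes~$f$, and nondegeneracy upgrades this to positive definiteness. Thus the standing hypotheses of Theorem~\ref{th:linearconv} on~$f$ and~$\phat$ are met.

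It then remains to verify hypotheses (i) and (ii) of Theorem~\ref{th:linearconv} for the sequences $\{p_i\}$ and $\{H_i\}$. Hypothesis~(i), $p_{i+1}=\exp_{p_i}\lambda_iH_i$, is exactly Step~2 of Algorithm~\ref{al:steepman}, and the prescription for $\lambda_i$ in Step~1 is verbatim the minimization condition demanded in the theorem. For hypothesis~(ii), substituting $H_i=-(\grad\f)_{p_i}$ gives
$$\langle-(\grad\f)_{p_i},H_i\rangle=\|(\grad\f)_{p_i}\|^2=\|(\grad\f)_{p_i}\|\,\|H_i\|,$$
so (ii) holds with equality and with the admissible constant $c=1\in(0,1]$. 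Theorem~\ref{th:linearconv} then provides a constant $E$ and a $\theta\in[0,1)$ with $d(p_i,\phat)\le E\theta^i$ for all $i$, which is the claimed linear convergence.

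There is essentially no obstacle here: the whole argument is the bookkeeping observation that steepest descent satisfies condition~(ii) with the largest possible constant $c=1$. Two minor points deserve a word. First, one must read ``converges to a local minimum'' as converging to a \emph{nondegenerate} one, since the argument needs a positive-definite Hessian there. Second, Theorem~\ref{th:linearconv} delivers the geometric bound $d(p_i,\phat)\le E\theta^i$ together with the decrease $f(p_{i+1})-f(\phat)\le\theta\bigl(f(p_i)-f(\phat)\bigr)$; this is the sense in which the convergence is ``linear,'' and one should not expect the pointwise ratio $d(p_{i+1},\phat)/d(p_i,\phat)$ to be uniformly below~$1$ unless the Hessian at~$\phat$ is sufficiently well conditioned.
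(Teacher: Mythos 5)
Your proposal is correct and is essentially the paper's own argument: the author's entire proof is the observation that the choice $H_i=-(\grad\f)_{p_i}$ yields $c=1$ in assumption~(ii) of Theorem~\ref{th:linearconv}. Your additional remark that ``local minimum'' must be read as a nondegenerate one (so the theorem's positive-definite Hessian hypothesis is available) is a fair gloss on an implicit assumption, but it does not change the route.
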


The choice $H_i=-(\grad\f)_{p_i}$ yields $c=1$ in the second
assumption the Theorem~\ref{th:linearconv}, which establishes the
corollary.

\begin{example}[Rayleigh's quotient on the
sphere]\label{eg:rayqgrad}\ignorespaces Let $S^{n-1}$ be the imbedded
sphere in~$\R^n$, i.e., $S^{n-1}=\{\,x\in\R^n:x^\T x=1\,\}$, where
$x^\T y$ denotes the standard inner product on~$\R^n$, which induces a
metric on~$S^{n-1}$.  Geodesics on the sphere are great circles and
parallel translation along geodesics is equivalent to rotating the
tangent plane along the great circle.  Let $x\in S^{n-1}$ and $h\in
T_x$ have unit length, and $v\in T_x$ be any tangent vector.  Then
$$\eqalign{\exp_x th &=x\cos t +h\sin t,\cr \tau h &=h\cos t -x\sin
t,\cr \tau v &=v-(h^\T v)\bigl(x\sin t +h(1-\cos t)\bigr),\cr}$$ where
$\tau$ is the parallelism along the geodesic $t\mapsto\exp th$.  Let
$Q$ be an \hbox{$n$-by-$n$} positive definite symmetric matrix with
distinct eigenvalues and define $\rho\colon S^{n-1}\to\R$ by
$\rho(x)=x^\T Qx$.  A computation shows that
\begin{equation}\label{eq:raygrad}
\half(\grad\rho)_x =Qx-\rho(x)x. \end{equation} The function $\rho$
has a unique minimum and maximum point at the eigenvectors
corresponding to the smallest and largest eigenvalues of~$Q$,
respectively. Because $S^{n-1}$ is geodesically complete, the method
of steepest descent in the opposite direction of the gradient
converges to the eigenvector corresponding to the smallest eigenvalue
of~$Q$; likewise for the eigenvector corresponding to the largest
eigenvalue.  Chu~\cite{Chu:sphere} considers the continuous limit of
this problem.  A computation shows that $\rho(x)$ is maximized along
the geodesic $\exp_x th$ ($\|h\|=1$) when $a\cos2t-b\sin2t=0$, where
$a=2x^\T Qh$ and $b=\rho(x)-\rho(h)$.  Thus $\cos t$ and $\sin t$ may
be computed with simple algebraic functions of $a$ and~$b$ (which
appear below in Algorithm~\ref{al:raysphere}).  The results of a
numerical experiment demonstrating the convergence of the method of
steepest descent applied to maximizing Rayleigh's quotient on~$S^{20}$
are shown in Figure~\ref{fig:raysphere} on page~\pageref{fig:raysphere}.
\end{example}

\def\temp{\cite{Brockett:sort,Brockett:grad}}
\begin{example}[Brockett~\temp]\label{eg:trHNgrad}\ignorespaces
Consider the function $f(\Theta)=\lyapunov$ on the special orthogonal
group $\SO(n)$, where $Q$ is a real symmetric matrix with distinct
eigenvalues and $N$ is a real diagonal matrix with distinct diagonal
elements. It will be convenient to identify tangent vectors in
$T_\Theta$ with tangent vectors in $T_I\cong\so(n)$, the tangent plane
at the identity, via left translation.  The gradient of~$f$ (with
respect to the negative Killing form of~$\so(n)$, scaled by~$1/(n-2)$)
at $\Theta\in\SO(n)$ is $\Theta[H,N]$, where
$H=\Ad_{\Theta^\T}(Q)=\Theta^\T Q\Theta$.  The group $\SO(n)$ acts on
the set of symmetric matrices by conjugation; the orbit of~$Q$ under
the action of~$\SO(n)$ is an isospectral submanifold of the symmetric
matrices.  We seek a $\Thetahat$ such that $f(\Thetahat)$ is
maximized.  This point corresponds to a diagonal matrix whose diagonal
entries are ordered similarly to those of~$N$.  A related example is
found in Smith~\cite{Me}, who considers the homogeneous space of
matrices with fixed singular values, and in Chu~\cite{Chu:grad}.

The Levi-Civita connection on~$\SO(n)$ is bi-invariant and invariant
with respect to inversion; therefore, geodesics and parallel
translation may be computed via matrix exponentiation of elements
in~$\so(n)$ and left (or right) translation~\cite[Ch.~II,
Ex.~6]{Helgason}.  The geodesic emanating from the identity in
$\SO(n)$ in direction $X\in\so(n)$ is given by the formula
$\exp_ItX=e^{tX}$, where the right hand side denotes regular matrix
exponentiation.  The expense of geodesic minimization may be avoided
if instead one uses Brockett's estimate~\cite{Brockett:grad} for the
step size.  Given $\Omega\in\so(n)$, we wish to find $t>0$ such that
$\phi(t)=\tr\Ad_{e^{-t\Omega}}(H)N$ is minimized.  Differentiating
$\phi$ twice shows that $\phi'(t)=-\tr\Ad_{e^{-t\Omega}}(\ad_\Omega
H)N$ and $\phi''(t)=-\tr\Ad_{e^{-t\Omega}}(\ad_\Omega H)\ad_\Omega N$,
where $\ad_\Omega A=[\Omega,A]$.  Hence, $\phi'(0)=2\tr H\Omega N$
and, by Schwarz's inequality and the fact that $\Ad$ is an isometry,
$|\phi''(t)|\le \|\ad_\Omega H\|\;\|\ad_\Omega N\|$.  We conclude that
if $\phi'(0)>0$, then $\phi'$ is nonnegative on the interval
\begin{equation}\label{eq:Brockettest} 0\le t\le {2\tr H\Omega
N\over\|\ad_\Omega H\|\;\|\ad_\Omega N\|}, \end{equation} which
provides an estimate for the step size of Step~1 in
Algorithm~\ref{al:steepman}.  The results of a numerical experiment
demonstrating the convergence of the method of steepest descent
(ascent) in~$\SO(20)$ using this estimate are shown in
Figure~\ref{fig:SOnconv}.
\end{example}

\section{Newton's method on Riemannian manifolds}
\label{sec:newton}

As in the optimization of functions on Euclidean space, quadratic
convergence can be obtained if the second order terms of the Taylor
expansion are used appropriately.  In this section we present Newton's
algorithm on Riemannian manifolds, prove that its convergence is
quadratic, and provide examples.  Whereas the convergence proof for
the method of steepest descent relies upon the Taylor expansion of the
function $f\!$, the convergence proof for Newton's method will rely upon
the Taylor expansion of the one-form $df$.  Note that Newton's method
has a counterpart in the theory of constrained optimization, as
described by, e.g., Fletcher~\cite{Fletcher},
Bertsekas~\cite{Bertsekas:newton,Bertsekas}, or
Dunn~\cite{Dunn:newton,Dunn:grad}.  The Newton method presented in
this section has only local convergence properties. There is a theory
of global Newton methods on Euclidean space and computational
complexity; see the work of Hirsch and Smale~\cite{HirschSmale},
Smale~\cite{Smale:fta,Smale:eff}, and Shub and
Smale~\cite{ShubSmale:I,ShubSmale:II}.

Let $M$ be an $n$-dimensional Riemannian manifold with Riemannian
structure $g$ and Levi-Civita connection $\nabla$, let $\mu$ be a
$C^\infty$ one-form on~$M$, and let $p$ in~$M$ be such that the
bilinear form $(\D\mu)_p\colon T_p\times T_p\to\R$ is nondegenerate.
Then, by abuse of notation, we have the pair of isomorphisms
$$T_p\mathrel{\adjarrow^{(\D\mu)_p}_
{(\D\mu)_p^{\rlap{$\scriptscriptstyle-1$}}}}T_p^*$$ with the forward
map defined by $X\mapsto (\covD X\mu)_p= (\D\mu)_p(\rdot,X)$, which is
nonsingular.  The notation $(\D\mu)_p$ will henceforth be used for
both the bilinear form defined by the covariant differential of~$\mu$
evaluated at~$p$ and the homomorphism from $T_p$ to~$T_p^*$ induced by
this bilinear form.  In case of an isomorphism, the inverse can be
used to compute a point in~$M$ where $\mu$ vanishes, if such a point
exists.  The case $\mu=df$ will be of particular interest, in which
case $\D\mu=\nabla^2\f$.  Before expounding on these ideas, we make
the following remarks.

\begin{remark}[The mean value theorem]\label{rem:mvt}\ignorespaces Let
$M$ be a manifold with affine connection~$\nabla$, $N_p$ a normal
neighborhood of~$p\in M$, the vector field $\Xtilde$ on~$N_p$ adapted
to~$X\in T_p$, $\mu$ a one-form on~$N_p$, and $\tau_\lambda$ the
parallelism with respect to $\exp tX$ for $t\in[0,\lambda]$.  Denote
the point $\exp\lambda X$ by~$p_\lambda$.  Then there exists an
$\epsilon>0$ such that for every $\lambda\in[0,\epsilon)$, there is an
$\alpha\in[0,\lambda]$ such that $$\tau_\lambda^{-1}\mu_{p_\lambda}
-\mu_p =\lambda(\covD\Xtilde\mu)_{p_\alpha}\circ\tau_\alpha.$$
\end{remark}

\begin{proof} As in the proof of Remark~\ref{rem:taylorf},
there exists an $\epsilon>0$ such that $\lambda X\in N_0$ for~all
$\lambda\in[0,\epsilon)$.  The map $\lambda\mapsto
(\tau_\lambda^{-1}\mu_{p_\lambda})(A)$, for any $A$ in~$T_p$, is a
$C^\infty$ function on~$[0,\epsilon)$ with derivative
$(d/dt)(\tau_t^{-1}\mu_{p_t})(A) =(d/dt)\mu_{p_t}(\tau_tA)
=\covD\Xtilde\bigl(\mu_{p_t}(\tau_tA)\bigr)
=(\covD\Xtilde\mu)_{p_t}(\tau_tA)+\mu_{p_t}\bigl(\covD\Xtilde(\tau_tA)\bigr)
=(\covD\Xtilde\mu)_{p_t}(\tau_tA)$.  The lemma follows from the mean
value theorem of real analysis.
\end{proof}

This remark can be generalized in the following way.

\begin{remark}[Taylor's theorem]\label{rem:taylort}\ignorespaces Let
$M$ be a manifold with affine connection~$\nabla$, $N_p$ a normal
neighborhood of~$p\in M$, the vector field $\Xtilde$ on~$N_p$ adapted
to~$X\in T_p$, $\mu$ a one-form on~$N_p$, and $\tau_\lambda$ the
parallelism with respect to $\exp tX$ for $t\in[0,\lambda]$.  Denote
the point $\exp\lambda X$ by~$p_\lambda$.  Then there exists an
$\epsilon>0$ such that for every $\lambda\in[0,\epsilon)$, there is an
$\alpha\in[0,\lambda]$ such that \begin{equation}\label{eq:taylort}
\tau_\lambda^{-1}\mu_{p_\lambda} =\mu_p +\lambda(\covD\Xtilde\mu)_p
+\cdots+{\lambda^{n-1}\over(n-1)!} (\covD\Xtilde^{n-1}\mu)_p
+{\lambda^n\over n!} (\covD\Xtilde^n\mu)_{p_\alpha}\circ\tau_\alpha.
\end{equation}
\end{remark}

The remark follows by applying Remark~\ref{rem:mvt} and the Taylor's
theorem of real analysis to the function $\lambda\mapsto
(\tau_\lambda^{-1}\mu_{p_\lambda})(A)$ for any $A$ in~$T_p$.

Remarks \ref{rem:mvt} and \ref{rem:taylort} can be generalized to
$C^\infty$ tensor fields, but we will only require
Remark~\ref{rem:taylort} for case $n=2$ to make the following
observation.

Let $\mu$ be a one-form on~$M$ such that for some $\phat$ in~$M$,
$\mu_\phat=0$.  Given any $p$ in a normal neighborhood of~$\phat$, we
wish to find $X$ in~$T_p$ such that $\exp_pX=\phat$.  Consider the
Taylor expansion of~$\mu$ about $p$, and let $\tau$ be the parallel
translation along the unique geodesic joining $p$ to~$\phat$. We have
by our assumption that $\mu$ vanishes at~$\phat$, and from
Eq.~(\ref{eq:taylort}) for $n=2$, $$0=\tau^{-1}\mu_\phat
=\tau^{-1}\mu_{\exp_pX} =\mu_p +(\D\mu)_p(\rdot,X) +\hot$$ If the
bilinear form $(\D\mu)_p$ is nondegenerate, the tangent vector $X$ may
be approximated by discarding the higher order terms and solving the
resulting linear equation $$\mu_p +(\D\mu)_p(\rdot,X) =0$$ for~$X$,
which yields $$X=-(\D\mu)_p^{-1}\mu_p.$$ This approximation is the
basis of the following algorithm.

\begin{algorithm}[Newton's method]\label{al:newtonman}\ignorespaces Let
$M$ be a complete Riemannian manifold with Riemannian structure $g$
and Levi-Civita connection $\nabla$, and let $\mu$ be a $C^\infty$
one-form on~$M$.
\begin{steps}
\step[0.] Select $p_0\in M$ such that $(\D\mu)_{p_0}$ is
     nondegenerate, and set $i=0$.
\step[1.] Compute $$\eqalign{H_i
     &=-(\D\mu)_{p_i}^{-1}\mu_{p_i}\cr p_{i+1} &=\exp_{p_i}H_i,\cr}$$
     (assume that $(\D\mu)_{p_i}$ is nondegenerate), increment $i$, and
     repeat.
\end{steps}
\end{algorithm}

It can be shown that if $p_0$ is chosen suitably close (within the
so-called domain of attraction) to a point $\phat$ in~$M$ such
that $\mu_\phat=0$ and $(\D\mu)_\phat$ is nondegenerate, then
Algorithm~\ref{al:newtonman} converges quadratically to~$\phat$.  The
following theorem holds for general one-forms; we will consider the
case where $\mu$ is exact.

\begin{theorem}\label{th:newtonquad}\ignorespaces Let $f\in
C^\infty(M)$ have a nondegenerate critical point at $\phat$.  Then
there exists a neighborhood $U$ of~$\phat$ such that for any $p_0\in
U$, the iterates of Algorithm~\ref{al:newtonman} for $\mu=df$ are well
defined and converge quadratically to~$\phat$.
\end{theorem}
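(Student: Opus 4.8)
The plan is to work in a single normal coordinate chart around $\phat$ and reduce the iteration to a contraction-type estimate, following the classical Newton analysis but tracking everything through parallel translation so the covariant quantities behave. First I would fix a normal neighborhood $U_0$ of $\phat$ on which $(\D\mu)_p = (\nabla^2\!f)_p$ is nondegenerate (possible by continuity, since $(\nabla^2\!f)_\phat = (d^2\!f)_\phat$ is nondegenerate by hypothesis) and on which $\|(\D\mu)_p^{-1}\|$ is uniformly bounded, say by some constant $C_1$; here norms of linear maps between the $T_p$'s are measured via the Riemannian metric. This legitimizes the iteration step: for $p_i \in U_0$ the vector $H_i = -(\D\mu)_{p_i}^{-1}\mu_{p_i}$ is defined, and I'll shrink $U_0$ later so that $\exp_{p_i}H_i$ stays in $U_0$.

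Next comes the key estimate. For $p = p_i \in U_0$, let $X \in T_p$ be the vector with $\exp_p X = \phat$, so $d(p,\phat) = \|X\|$, and let $\tau$ be parallel translation along the geodesic from $p$ to $\phat$. Apply Remark~\ref{rem:taylort} (Taylor's theorem, case $n=2$) to the one-form $\mu = df$ along this geodesic: there is an $\alpha \in [0,1]$ with
\[
0 = \tau^{-1}\mu_\phat = \mu_p + (\D\mu)_p(\rdot,X) + \tfrac12(\covD\Xtilde^2\mu)_{p_\alpha}\circ\tau_\alpha(\rdot,X,X).
\]
Solving for $\mu_p$ and substituting into $H_i = -(\D\mu)_p^{-1}\mu_p$ gives
\[
H_i = X + \tfrac12 (\D\mu)_p^{-1}\bigl((\covD\Xtilde^2\mu)_{p_\alpha}\circ\tau_\alpha(\rdot,X,X)\bigr).
\]
The point is that $H_i$ agrees with the exact correction $X$ up to a term quadratic in $\|X\|$: using $\|(\D\mu)_p^{-1}\| \le C_1$ and a uniform bound $C_2$ on $\|\nabla^2 df\| = \|\nabla^3 f\|$ over (a compact subneighborhood of) $U_0$, we get $\|H_i - X\| \le C_3 \|X\|^2$ with $C_3 = \tfrac12 C_1 C_2$.

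Finally I would convert $\|H_i - X\|$ into a distance bound on the next iterate. Since $\exp_p X = \phat$ and $p_{i+1} = \exp_p H_i$, and both $X$ and $H_i$ lie in a fixed compact ball in $T_p$ on which $\exp_p$ is a diffeomorphism with Lipschitz-bounded derivative (uniformly in $p$, by compactness after shrinking $U_0$), there is a constant $C_4$ with $d(p_{i+1},\phat) = d(\exp_p H_i, \exp_p X) \le C_4 \|H_i - X\| \le C_4 C_3\, d(p_i,\phat)^2$. Set $\theta = C_4 C_3$. Now choose the final neighborhood $U \subseteq U_0$ to be a metric ball $B_r(\phat)$ with $r$ small enough that $\theta r < 1$ (so the iteration is a contraction on $U$, keeping all iterates in $U$ and forcing $d(p_i,\phat) \to 0$) and small enough that the inclusion $\exp_{p_i}H_i \in U_0$ used above genuinely holds. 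Then $d(p_{i+1},\phat) \le \theta\, d^2(p_i,\phat)$ for all $i$, which is exactly quadratic convergence in the sense of the Definition, and well-definedness of the iterates is automatic since $U \subseteq U_0$.

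The main obstacle is making the constants genuinely uniform in the base point $p$: the Taylor remainder in Remark~\ref{rem:taylort} is stated pointwise, and the norms $\|(\D\mu)_p^{-1}\|$, $\|\exp_p\|$-derivatives, and $\|\nabla^3 f\|$ all a priori depend on $p$. The fix is a routine compactness argument — pass to the closure of a slightly smaller normal ball around $\phat$, on which all these quantities are continuous and hence bounded — but it is the step that needs care to state cleanly, and it is also where one must be careful that the normal neighborhood structure (so that "the geodesic from $p$ to $\phat$" and "$X$ with $\exp_p X = \phat$" are unambiguous) is preserved under shrinking. Everything else is the standard Newton contraction estimate transcribed into covariant language, with Remark~\ref{rem:taylort} doing the work that the second-order Taylor expansion of $\nabla f$ does in the Euclidean proof.
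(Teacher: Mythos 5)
Your proposal is correct, and its engine --- the second-order covariant Taylor expansion of $df$ along the geodesic from $p_i$ to $\phat$, yielding $H_i=X+O(\|X\|^2)$ --- is exactly the engine of the paper's proof (Eq.~(\ref{eq:quadproof2}) there, combined with the Newton definition of $H_i$). Where you genuinely diverge is in converting the tangent-space estimate $\|H_i-X\|\le C_3\|X\|^2$ into a bound on $d(p_{i+1},\phat)$: you observe that $p_{i+1}=\exp_{p_i}H_i$ and $\phat=\exp_{p_i}X$ are images of two nearby vectors under the \emph{same} map $\exp_{p_i}$, and invoke a uniform Lipschitz bound on $\exp_{p_i}$ over a compact neighborhood --- elementary and sufficient, with the uniformity secured by the compactness argument you flag. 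The paper instead measures $d(p_{i+1},\phat)=\|X_{i+1}\|$ with $X_{i+1}\in T_{p_{i+1}}$, decomposes $X_i=H_i+\tau^{-1}X_{i+1}+\vrem_i$ around the geodesic triangle with vertices $p_i$, $p_{i+1}$, $\phat$, and controls the defect $\vrem_i$ by Karcher's curvature comparison estimate (\ref{eq:Karcher}), obtaining $\|\vrem_i\|\le\delta^{\rm iv}d^3(p_i,\phat)$. The payoff of that heavier route is a clean split of the error in Eq.~(\ref{eq:quadproof3}) into a term governed by $\nabla^3\f$ alone and a separate cubic curvature term; this is what is exploited later (the Rayleigh-quotient proposition and Remark~\ref{rem:trHNcub}) to upgrade quadratic to cubic convergence when $(\nabla^3\f)_\phat(\rdot,X,X)=0$. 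Your version can reach the same refinement, but only after the additional observation that $\|\nabla^3\f\|=O(d(p,\phat))$ near such a point. Your closing contraction argument ($\theta r<1$ keeps all iterates in $U$) is the right way to secure well-definedness, a point the paper leaves largely implicit.
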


The proof of this theorem is a generalization of the corresponding
proof for Euclidean spaces, with an extra term containing the
Riemannian curvature tensor (which of course vanishes in the latter
case).

\begin{proof} If $p_j=\phat$ for some integer $j$, the assertion
becomes trivial; assume otherwise.  Define $X_i\in T_{p_i}$ by the
relations $\phat=\exp X_i$, $i=0$, $1$, \dots, so that
$d(p_i,\phat)=\|X_i\|$ (n.b.\ this convention is opposite that used in
the proof of Theorem~\ref{th:linearconv}).  Consider the geodesic
triangle with vertices $p_i$, $p_{i+1}$, and $\phat$, and sides $\exp
tX_i$ from $p_i$ to~$\phat$, $\exp tH_i$ from $p_i$ to~$p_{i+1}$, and
$\exp tX_{i+1}$ from $p_{i+1}$ to~$\phat$, for $t\in[0,1]$.  Let
$\tau$ be the parallelism with respect to the side $\exp tH_i$ between
$p_i$ and $p_{i+1}$.  There exists a unique tangent vector $\vrem_i$
in~$T_{p_i}$ defined by the equation \begin{equation}\label{eq:vrem}
X_i=H_i+\tau^{-1}X_{i+1}+\vrem_i \end{equation} ($\vrem_i$ may be
interpreted as the amount by which vector addition fails).  If we use
the definition $H_i=-(\nabla^2\f)_{p_i}^{-1}df_{p_i}$ of
Algorithm~\ref{al:newtonman}, apply the isomorphism
$(\nabla^2\f)_{p_i}\colon T_{p_i}\to T_{p_i}^*$ to both sides of
Eq.~(\ref{eq:vrem}), we obtain the equation
\begin{equation}\label{eq:quadproof1} (\nabla^2\f)_{p_i}
(\tau^{-1}X_{i+1}) =df_{p_i} +(\nabla^2\f)_{p_i}X_i
-(\nabla^2\f)_{p_i}\vrem_i. \end{equation} By Taylor's theorem, there
exists an $\alpha\in[0,1]$ such that
\begin{equation}\label{eq:quadproof2} \tau_1^{-1}df_\phat =df_{p_i}
+(\covD{\Xtilde_i}df)_{p_i} +\half(\covD{\Xtilde_i}^2df)_{p_\alpha}
\circ\tau_\alpha \end{equation} where $\tau_t$ is the parallel
translation from~$p_i$ to~$p_t=\exp tX_i$.  The trivial identities
$(\covD{\Xtilde_i}df)_{p_i} =(\nabla^2\f)_{p_i}X_i$ and
$(\covD{\Xtilde_i}^2df)_{p_\alpha} =(\nabla^3\f)_{p_\alpha}
(\tau_\alpha\rdot,\tau_\alpha X_i,\tau_\alpha X_i)$ will be used to
replace the last two terms on the right hand side of
Eq.~(\ref{eq:quadproof2}).  Combining the assumption that $df_\phat=0$
with Eqs.\ (\ref{eq:quadproof1}) and (\ref{eq:quadproof2}), we obtain
\begin{equation}\label{eq:quadproof3} (\nabla^2\f)_{p_i}
(\tau^{-1}X_{i+1})= -\half(\covD{\Xtilde_i}^2df)_{p_\alpha}
\circ\tau_\alpha -(\nabla^2\f)_{p_i}\vrem_i. \end{equation}

By the smoothness of~$f$ and $g$, there exists an $\epsilon>0$ and
constants $\delta'$, $\delta''$, $\delta'''$, all greater than zero,
such that whenever $p$ is in the convex normal ball $\Ball$,
$$\eqaligncond{(i)& \|(\nabla^2\f)_p(\rdot,X)\|&\ge\delta'\|X\|
&for~all $X\in T_p$,\cr (ii)&
\|(\nabla^2\f)_p(\rdot,X)\|&\le\delta''\|X\| &for~all $X\in T_p$,\cr
(iii)& \|(\nabla^3\f)_p(\rdot,X,X)\|&\le\delta'''\|X\|^2 &for~all
$X\in T_p$,\cr}$$ where the induced norm on~$T_p^*$ is used in all
three cases.  Taking the norm of both sides of
Eq.~(\ref{eq:quadproof3}), applying the triangle inequality to the
right hand side, and using the fact that parallel translation is an
isometry, we obtain the inequality
\begin{equation}\label{eq:quadproof4} \delta'd(p_{i+1},\phat)\le
\delta'''d^2(p_i,\phat) +\delta''\|\vrem_i\|. \end{equation}

The length of~$\vrem_i$ can be bounded by a cubic expression
in~$d(p_i,\phat)$ by considering the distance between the points
$\exp(H_i+\tau^{-1}X_{i+1})$ and $\exp X_{i+1}=\phat$.  Given $p\in
M$, $\epsilon>0$ small enough, let $a$, $v\in T_p$ be such that
$\|a\|+\|v\|\le\epsilon$, and let $\tau$ be the parallel translation
with respect to the geodesic from~$p$ to~$q=\exp_p a$.
Karcher~\cite[App.~C2.2]{Karcher} shows that \begin{equation}
\label{eq:Karcher} d\bigl(\exp_p(a+v), \exp_q(\tau v)\bigr)\le
\|a\|\cdot{\rm const.}\,(\max|K|)\cdot\epsilon^2,\end{equation} where
$K$ is the sectional curvature of~$M$ along any section in the tangent
plane at any point near~$p$.

There exists a constant $c>0$ such that $\|\vrem_i\|\le c\,
d\bigl(\phat, {\exp(H_i +\tau^{-1}X_{i+1})}\bigr)$.
By~(\ref{eq:Karcher}), we have $\|\vrem_i\|\le {\rm
const.}\,\|H_i\|\epsilon^2$.  Taking the norm of both sides of the
Taylor formula $df_{p_i} =-\int_0^1 (\covD{\Xtilde_i}df) (\exp
tX_i)\,dt$ and applying a standard integral inequality and
inequality~(ii) from above yields $\|df_{p_i}\|\le\delta''\|X_i\|$ so
that $\|H_i\|\le {\rm const.}\,\|X_i\|$.  Furthermore, we have the
triangle inequality $\|X_{i+1}\|\le \|X_i\|+\|H_i\|$, therefore
$\epsilon$ may be chosen such that $\|H_i\|+\|X_{i+1}\|\le\epsilon\le
{\rm const.}\,\|X_i\|$.  By~(\ref{eq:Karcher}) there exists
$\delta^{\rm iv}>0$ such that $\|\vrem_i\|\le\delta^{\rm
iv}d^3(p_i,\phat)$.
\end{proof}

\begin{corollary} If\/ $(\nabla^2\f)_\phat$ is positive (negative)
definite and Algorithm~\ref{al:newtonman} converges to~$\phat$, then
Algorithm~\ref{al:newtonman} converges quadratically to a local
minimum (maximum) of~$f$.
\end{corollary}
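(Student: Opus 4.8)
The plan is to derive the corollary from Theorem~\ref{th:newtonquad} in two short moves: first verify that the limit point $\phat$ is a nondegenerate critical point of~$f$, so that the theorem applies, and then observe that a positive definite Hessian at a critical point forces $\phat$ to be a strict local minimum.

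The first move is the one requiring a little care, since we are only told that the iterates of Algorithm~\ref{al:newtonman} (with $\mu=df$) converge to~$\phat$, and must deduce $df_\phat=0$. Because $(\nabla^2\f)_\phat$ is positive definite, continuity of $\nabla^2\f$ yields a neighborhood of~$\phat$ on which $(\nabla^2\f)_p$ is invertible with a uniform bound on $(\nabla^2\f)_p^{-1}$; for all large~$i$ the point $p_i$ lies in this neighborhood. From $p_{i+1}=\exp_{p_i}H_i$ and $p_i\to\phat$ one has $\|H_i\|=d(p_i,p_{i+1})\to0$, and since $H_i=-(\nabla^2\f)_{p_i}^{-1}df_{p_i}$ with the inverse uniformly bounded, $\|df_{p_i}\|\to0$, whence $df_\phat=0$ by continuity of $df$. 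At a critical point Eq.~(\ref{eq:d2f}) gives $(\nabla^2\f)_\phat=(d^2\f)_\phat$, which is nondegenerate by hypothesis, so $\phat$ is a nondegenerate critical point.

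With that in hand, Theorem~\ref{th:newtonquad} supplies a neighborhood $U$ of~$\phat$ on which the iterates are well defined and converge quadratically to~$\phat$. Since $p_i\to\phat$, there is an integer $N$ with $p_i\in U$ for all $i\ge N$; applying the theorem to the starting point $p_N$ produces a constant $\theta\ge0$ with $d(p_{i+1},\phat)\le\theta\,d^2(p_i,\phat)$ for all $i\ge N$, which is exactly quadratic convergence of the whole sequence.

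Finally, to identify $\phat$ as a local minimum I would reuse the argument behind Eq.~(\ref{eq:fbounds}): the second order Taylor formula~(\ref{eq:taylorf2}) applied at the critical point~$\phat$, together with a lower bound $(\nabla^2\f)_p(X,X)\ge k\|X\|^2$ valid on a neighborhood of~$\phat$, gives $f(p)-f(\phat)\ge\half k\,d^2(p,\phat)>0$ for $p\ne\phat$ near~$\phat$, so $\phat$ is a strict local minimum; the negative definite case follows by applying everything to~$-f$. The only real obstacle is the first step — turning ``the algorithm converges to~$\phat$'' into ``$df_\phat=0$'' — and even that is routine once the boundedness of $(\nabla^2\f)_{p_i}^{-1}$ near~$\phat$ is noted; the rest is immediate from the already-established theorem and the standard second-order sufficiency condition.
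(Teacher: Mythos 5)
Your overall route is the intended one: the paper states this corollary without any proof, treating it as an immediate consequence of Theorem~\ref{th:newtonquad} together with the observation (made just before Theorem~\ref{th:linearconv}) that $(\nabla^2\f)_\phat=(d^2\f)_\phat$ at a critical point, so that positive definiteness there yields a strict local minimum via the Taylor formula~(\ref{eq:taylorf2}) and the bound~(\ref{eq:fbounds}). Your second and fourth paragraphs carry out exactly this. The one step you add that the paper does not---deducing $df_\phat=0$ from convergence of the iterates---contains a genuine error: you assert $\|H_i\|=d(p_i,p_{i+1})$, but the geodesic $t\mapsto\exp_{p_i}tH_i$ has length $\|H_i\|$ and need not be minimizing, so all one gets is $d(p_i,p_{i+1})\le\|H_i\|$, which points the wrong way---$d(p_i,p_{i+1})\to0$ does not force $\|H_i\|\to0$. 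This is not a vacuous worry: on a compact manifold $\exp_p V$ can coincide with a point arbitrarily close to~$p$ for $V$ of large norm (a closed geodesic), which is precisely the situation your chain of implications must rule out.

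The step can be repaired, but by a different argument: the uniform bound on $(\nabla^2\f)_p^{-1}$ near~$\phat$ gives $\|H_i\|\le{\rm const.}\,\|df_{p_i}\|$, which stays bounded, and by continuity of $df$, of $\nabla^2\f\!$, and of parallel translation, the vectors $\tau_{p_i\phat}H_i$ converge to $V=-(\nabla^2\f)_\phat^{-1}df_\phat$; continuity of $\exp$ then forces $\exp_\phat V=\lim p_{i+1}=\phat$. To conclude $V=0$, and hence $df_\phat=0$, one still needs $\exp_\phat$ to be injective on a ball large enough to contain~$V$, i.e.\ an a priori bound keeping $\|V\|$ below the injectivity radius. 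The cleanest resolution is to read the corollary as the paper evidently intends: $\phat$ is already the nondegenerate critical point of Theorem~\ref{th:newtonquad}, so criticality is part of the hypotheses rather than something to be derived, and the corollary reduces to that theorem plus the second-order sufficiency argument you give at the end, which is correct.
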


\begin{example}[Rayleigh's quotient on the
sphere]\label{eg:newtonray}\ignorespaces Let $S^{n-1}$ and
$\rho(x)=x^\T Qx$ be as in Example~\ref{eg:rayqgrad}. It will be
convenient to work with the coordinates $x^1$,~\dots, $x^n$ of the
ambient space~$\R^n$, treat the tangent plane $T_xS^{n-1}$ as a vector
subspace of~$\R^n$, and make the identification $T_xS^{n-1}\cong
T_x^*S^{n-1}$ via the metric. In this coordinate system, geodesics on
the sphere obey the second order differential equation $\ddot
x^k+x^k=0$, $k=1$,~\dots, $n$. Thus the Christoffel symbols are given
by $\Gamma_{ij}^k=\delta_{ij}x^k$, where $\delta_{ij}$ is the
Kronecker delta.  The $ij$th component of the second covariant
differential of~$\rho$ at~$x$ in~$S^{n-1}$ is given by (cf.\
Eq.~(\ref{eq:d2f})) $$\bigl((\Dsqr\rho)_x\bigr)_{ij} =2Q_{ij}
-\sum_{k,l}\delta_{ij}x^k\cdot 2Q_{kl}x^l =2\bigl(Q_{ij}
-\rho(x)\delta_{ij}\bigr),$$ or, written as matrices,
\begin{equation}\label{eq:d2ray} \half(\Dsqr\rho)_x=Q-\rho(x)I.
\end{equation} Let $u$ be a tangent vector in~$T_x S^{n-1}$.
A linear operator $A\colon\R^n\to\R^n$ defines a linear operator on
the tangent plane $T_xS^{n-1}$ for each $x$ in~$S^{n-1}$ such that
$$A\rdot u=Au-(x^\T\!Au)x =(I-xx^\T)Au$$ If $A$ is invertible as an
endomorphism of the ambient space $\R^n$, the solution to the linear
equation $A\rdot u=v$ for $u$, $v$ in~$T_xS^{n-1}$ is
\begin{equation}\label{eq:tanginv} u=A^{-1}\left(v
-{(x^\T\!A^{-1}v)\over(x^\T\!A^{-1}x)}x\right).
\end{equation} For Newton's method, the direction
$H_i$ in~$T_xS^{n-1}$ is the solution of the equation
$$(\Dsqr\rho)_{x_i}\rdot H_i= -(\grad\rho)_{x_i}.$$ Combining Eqs.\
(\ref{eq:raygrad}), (\ref{eq:d2ray}), and (\ref{eq:tanginv}), we
obtain $$H_i =-x_i +\alpha_i\bigl(Q-\rho(x_i)I\bigr)^{-1}x_i$$ where
$\alpha_i=1\big/x_i^\T(Q-\rho(x_i)I)^{-1}x_i$.  This gives rise to the
following algorithm for computing eigenvectors of the symmetric
matrix~$Q$.
\end{example}

\begin{algorithm}[Newton-Rayleigh quotient
method]\label{al:newtonray}\ignorespaces Let $Q$ be a real symmetric
\hbox{$n$-by-$n$} matrix.
\begin{steps}
\step[0.] Select $x_0$ in~$\R^n$ such that $x_0^\T x_0=1$, and
     set $i=0$.
\step[1.] Compute $$y_i=\bigl(Q-\rho(x_i)I\bigr)^{-1}x_i$$ and
     set $\alpha_i=1\big/x_i^\T y_i$.
\step[2.] Compute $$\eqalign{H_i &=-x_i+\alpha_iy_i,\quad
     \theta_i=\|H_i\|,\cr
     x_{i+1} &=x_i\cos\theta_i +H_i\sin\theta_i/\theta_i,\cr}$$
     increment $i$, and go~to Step~1.
\end{steps}
\end{algorithm}

The quadratic convergence guaranteed by Theorem~\ref{th:newtonquad} is
in fact too conservative for Algorithm~\ref{al:newtonray}.  As
evidenced by Figure~\ref{fig:raysphere}, Algorithm~\ref{al:newtonray}
converges cubically.

\begin{proposition} If $\lambda$ is a distinct eigenvalue of the
symmetric matrix~$Q$, and Algorithm~\ref{al:newtonray} converges to
the corresponding eigenvector $\xhat$, then it converges cubically.
\end{proposition}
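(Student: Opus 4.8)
The plan is to show that Algorithm~\ref{al:newtonray} is precisely the coordinate expression of Newton's method (Algorithm~\ref{al:newtonman}) applied to $\mu=d\rho$ on $S^{n-1}$, and then to sharpen the quadratic estimate of Theorem~\ref{th:newtonquad} by one order, exploiting the special structure of $\rho$ near an eigenvector. The key algebraic fact, already recorded in Example~\ref{eg:newtonray}, is that the Newton step at $x_i$ is $H_i=-x_i+\alpha_i y_i$ with $y_i=(Q-\rho(x_i)I)^{-1}x_i$ and $\alpha_i=1/(x_i^\T y_i)$, and that the update $x_{i+1}=\exp_{x_i}H_i$ is the great-circle step $x_i\cos\theta_i+H_i\sin\theta_i/\theta_i$. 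So the first step is to verify that the hypotheses of Theorem~\ref{th:newtonquad} hold: $\xhat$ is a critical point of $\rho$ (since $(\grad\rho)_{\xhat}=2(Q\xhat-\lambda\xhat)=0$), and it is nondegenerate because $\half(\Dsqr\rho)_{\xhat}=Q-\lambda I$ restricted to $T_{\xhat}S^{n-1}=\xhat^\perp$ is invertible exactly when $\lambda$ is a \emph{distinct} eigenvalue. Hence quadratic convergence is already guaranteed; what remains is to gain the extra order.

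Next I would obtain the sharpened local estimate. Write $x_i=\xhat\cos\|X_i\|+ (X_i/\|X_i\|)\sin\|X_i\|$ for the tangent vector $X_i\in T_{\xhat}S^{n-1}$ with $\exp_{\xhat}X_i=x_i$, so $d(x_i,\xhat)=\|X_i\|=:\epsilon_i$. The Rayleigh quotient satisfies $\rho(x_i)=\lambda+O(\epsilon_i^2)$ because $\xhat$ is a critical point with $\rho(\xhat)=\lambda$; more precisely $\rho(x_i)-\lambda=(\nabla^2\rho)_{\xhat}(X_i,X_i)/2+O(\epsilon_i^3)$. The mechanism producing cubic convergence is that the shift $\rho(x_i)$ used in the inverse iteration $y_i=(Q-\rho(x_i)I)^{-1}x_i$ agrees with the true eigenvalue $\lambda$ to \emph{second} order in $\epsilon_i$, not merely first order. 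Substituting $\rho(x_i)=\lambda+O(\epsilon_i^2)$ into the Newton step and expanding, one finds that the first-order error term in $H_i+\tau^{-1}X_{i+1}$ (the quantity controlling $\vrem_i$ and $d(x_{i+1},\xhat)$ in the proof of Theorem~\ref{th:newtonquad}) cancels, and in fact the quadratic term cancels as well, leaving $d(x_{i+1},\xhat)=O(\epsilon_i^3)$. Concretely: decompose $x_i$ in an orthonormal eigenbasis $\{v_1=\xhat,v_2,\dots,v_n\}$ of $Q$ with eigenvalues $\lambda=\mu_1,\mu_2,\dots,\mu_n$; writing $x_i=\cos\epsilon_i\,\xhat+\sin\epsilon_i\sum_{k\ge2}c_k v_k$ (with $\sum c_k^2=1$), a direct computation of $(Q-\rho(x_i)I)^{-1}x_i$ componentwise shows the $\xhat$-component of the normalized $x_{i+1}$ is $1-O(\epsilon_i^6)$ while each $v_k$-component is $O(\epsilon_i^3)$, because $\rho(x_i)-\mu_1=\sin^2\epsilon_i\sum_{k\ge2}c_k^2(\mu_k-\mu_1)$ is exactly quadratic.

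The main obstacle is the bookkeeping in that last expansion: one must track the $O(\epsilon_i^2)$ shift error through the matrix inverse and the subsequent renormalization carefully enough to see the cancellation to second order, rather than merely to first order as in the generic quadratic-convergence argument. The curvature term that appears in Theorem~\ref{th:newtonquad} via Karcher's estimate~(\ref{eq:Karcher}) is already $O(\|H_i\|\epsilon_i^2)=O(\epsilon_i^3)$ here (since $\|H_i\|=O(\epsilon_i)$), so it is harmless and does not spoil the cubic rate; the real work is purely the linear-algebraic verification that the shift-induced error is second order. Once that is in hand, applying the bounds from the proof of Theorem~\ref{th:newtonquad} with the improved estimate $\|\vrem_i\|+(\text{shift error})=O(\epsilon_i^3)$ yields $d(x_{i+1},\xhat)\le\theta\,d^3(x_i,\xhat)$ for $x_i$ near $\xhat$, which is the claim.
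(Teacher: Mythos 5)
Your strategy is essentially the paper's second proof of this proposition: the paper follows Parlett, writes $x_i=\xhat\cos\psi_i+u_i\sin\psi_i$ with $u_i\perp\xhat$, expands the update in the small quantities $\lambda-\rho_i$, $\theta_i$, $\psi_i$, and uses precisely the identity you isolate, $\lambda-\rho_i=(\lambda-\rho(u_i))\sin^2\psi_i$, to show that the numerator of the resulting formula for $|\tan\psi_{i+1}|/|\tan\psi_i|$ is $O(\sin^2\psi_i)$ while the denominator is of order unity, whence $|\psi_{i+1}|={\rm const.}\,|\psi_i|^3+\hot$ So the mechanism you name --- the shift $\rho(x_i)$ agrees with $\lambda$ to \emph{second} order --- is the right one. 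The paper also gives a much shorter first proof that stays entirely inside the framework of Theorem~\ref{th:newtonquad}: the $ijk$th component of $(\nabla^3\rho)_\xhat$ in ambient coordinates is $-2\lambda\xhat^k\delta_{ij}$, so $(\nabla^3\rho)_\xhat(\rdot,X,X)=0$ for $X\in T_\xhat S^{n-1}$, and the second-order term on the right-hand side of Eq.~(\ref{eq:quadproof3}) vanishes at the critical point; smoothness then upgrades the quadratic bound to a cubic one. That computation is the clean substitute for your unjustified assertion that ``the quadratic term cancels as well.''

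There are two genuine gaps in what you have written. First, your concrete componentwise computation analyzes the normalization of $y_i=(Q-\rho_iI)^{-1}x_i$, i.e.\ the Rayleigh quotient iterate, not the iterate $x_{i+1}=x_i\cos\theta_i+H_i\sin\theta_i/\theta_i$ that Algorithm~\ref{al:newtonray} actually produces. The two differ, and the paper's Proof~2 must track the correction term $\beta_i=\cos\theta_i-\sin\theta_i/\theta_i$ and verify $\beta_i=-\theta_i^2/3+\hot=O(\sin^2\psi_i)$ so that this discrepancy enters only at second order; equivalently, one can note that $(x_i+H_i)/\|x_i+H_i\|=\pm y_i/\|y_i\|$ and that $\exp_{x_i}H_i$ differs from it by $O(\|H_i\|^3)=O(\epsilon_i^3)$. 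Without one of these estimates your eigenbasis computation proves cubic convergence of the \emph{wrong algorithm}. Second, the central cancellation claim (``the first-order error term in $H_i+\tau^{-1}X_{i+1}$ cancels, and in fact the quadratic term cancels as well'') is exactly the content of the proposition and is asserted rather than proved; you must either carry out the Parlett-style bookkeeping you defer, or compute the third covariant differential as in the paper's Proof~1.
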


\begin{proof}[Proof 1] In the coordinates $x^1$,~\dots, $x^n$ of the
ambient space $\R^n$, the $ijk$th component of the third covariant
differential of~$\rho$ at~$\xhat$ is $-2\lambda \xhat^k\delta_{ij}$.
Let $X\in T_\xhat S^{n-1}$.  Then $(\nabla^3\rho)_\xhat(\rdot,X,X)=0$
and the second order terms on the right hand side of
Eq.~(\ref{eq:quadproof3}) vanish at the critical point.  The
proposition follows from the smoothness of~$\rho$.
\end{proof}

\begin{proof}[Proof 2] The proof follows
Parlett's~\cite[p.~72ff\/]{Parlett} proof of cubic convergence for the
Rayleigh quotient iteration.  Assume that for~all $i$, $x_i\ne\xhat$,
and denote $\rho(x_i)$ by~$\rho_i$.  For~all $i$, there is an angle
$\psi_i$ and a unit length vector $u_i$ defined by the equation $x_i
=\xhat\cos\psi_i +u_i\sin\psi_i$, such that $\xhat^\T u_i=0$. By
Algorithm~\ref{al:newtonray} $$\eqalign{x_{i+1} &=\xhat\cos\psi_{i+1}
+u_{i+1}\sin\psi_{i+1} =x_i\cos\theta_i +H_i\sin\theta_i/\theta_i\cr
&=\xhat\biggl({\alpha_i\sin\theta_i\over (\lambda-\rho_i)\theta_i}
+\beta_i\biggr)\cos\psi_i +\biggl( {\alpha_i\sin\theta_i\over\theta_i}
(Q-\rho_iI)^{-1}u_i +\beta_iu_i\biggr)\sin\psi_i,\cr}$$ where
$\beta_i=\cos\theta_i-\sin\theta_i/\theta_i$.  Therefore,
\begin{equation}\label{eq:tanpsi} |\tan\psi_{i+1}| ={\Bigl\|
{\alpha_i\sin\theta_i\over\theta_i} (Q-\rho_iI)^{-1}u_i
+\beta_iu_i\Bigr\|\over \Bigl| {\alpha_i\sin\theta_i\over
(\lambda-\rho_i)\theta_i} +\beta_i\Bigr|} \cdot|\tan\psi_i|.
\end{equation} The following equalities and low order approximations
in terms of the small quantities $\lambda-\rho_i$, $\theta_i$, and
$\psi_i$ are straightforward to establish: ${\lambda-\rho_i}
={(\lambda-\rho(u_i))}
\discretionary{}{\the\textfont2\char2\thinspace}{}
\sin^2\psi_i$, $\theta_i^2=\cos^2\psi_i\sin^2\psi_i +\hot$, 
$\alpha_i={(\lambda-\rho_i)} +\hot$, and $\beta_i=-\theta_i^2/3
+\hot$\spacefactor=3000\relax\space Thus, the denominator of the large
fraction in Eq.~(\ref{eq:tanpsi}) is of order unity and the numerator
is of order $\sin^2\psi_i$.  Therefore, we have $$|\psi_{i+1}|={\rm
const.}\,|\psi_i|^3 +\hot\tombstone$$
\end{proof}

\begin{remark} If Algorithm~\ref{al:newtonray} is simplified by
replacing Step~2 with
\begin{steps}
\step[2.$\!'$] Compute $$x_{i+1}=y_i\big/\|y_i\|,$$ increment
     $i$, and go~to Step~1.
\end{steps}
then we obtain the Rayleigh quotient iteration.  These two algorithms
differ by the method in which they use the vector
$y_i=(Q-\rho(x_i)I)^{-1}x_i$ to compute the next iterate on the
sphere.  Algorithm~\ref{al:newtonray} computes the point $H_i$
in~$T_{x_i}S^{n-1}$ where $y_i$ intersects this tangent plane, then
computes $x_{i+1}$ via the exponential map of this vector (which
``rolls'' the tangent vector $H_i$ onto the sphere).  The Rayleigh
quotient iteration computes the intersection of~$y_i$ with the sphere
itself and takes this intersection to be $x_{i+1}$. The latter
approach approximates Algorithm~\ref{al:newtonray} up to quadratic
terms when $x_i$ is close to an eigenvector.
Algorithm~\ref{al:newtonray} is more expensive to compute
than---though of the same order as---the Rayleigh quotient iteration;
thus, the |RQI| is seen to be an efficient approximation of Newton's
method.
\end{remark}

If the exponential map is replaced by the chart $v\in T_x\mapsto
(x+v)/\|x+v\|\in S^{n-1}$, Shub~\cite{Shub:ray} shows that a
corresponding version of Newton's method is equivalent to the |RQI|.

\begin{example}[The function $\lyapunov$]\label{eg:trHNnewton}\ignorespaces
Let $\Theta$, $Q$, $H=\Ad_{\Theta^\T}(Q)$, and $\Omega$ be as in
Example~\ref{eg:trHNgrad}.  The second covariant differential of
$f(\Theta)=\lyapunov$ may be computed either by polarization of the
second order term of $\tr\Ad_{e^{-t\Omega}}(H)N$, or by covariant
differentiation of the differential $df_\Theta
=-\tr[H,N]\Theta^\T(\rdot)$: $$(\nabla^2\f)_\Theta(\Theta X,\Theta Y)
=-\half\tr\bigl([H,\ad_X N]-[\ad_X H,N]\bigr)Y,$$ where $X$,
$Y\in\so(n)$.  To compute the direction $\Theta X\in T_\Theta$,
$X\in\so(n)$, for Newton's method, we must solve the equation
$(\nabla^2\f)_\Theta(\Theta\rdot,\Theta X)=df_\Theta$, which yields
the linear equation $$L_\Theta(X)\buildrel{\rm def}\over=[H,\ad_X
N]-[\ad_X H,N] =2[H,N].$$ The linear operator
$L_\Theta\colon\so(n)\to\so(n)$ is self-adjoint for~all $\Theta$ and,
in a neighborhood of the maximum, negative definite.  Therefore,
standard iterative techniques in the vector space $\so(n)$, such as
the classical conjugate gradient method, may be used to solve this
equation near the maximum.  The results of a numerical experiment
demonstrating the convergence of Newton's method in~$\SO(20)$ are
shown in Figure~\ref{fig:SOnconv}.  As can be seen, Newton's method
converged within round-off error in two iterations.
\end{example}

\begin{remark}\label{rem:trHNcub}\ignorespaces If Newton's method
applied to the function $f(\Theta)=\lyapunov$ converges to the point
$\Thetahat$ such that $\Ad_{\Thetahat^\T}(Q)=H_\infty=\alpha N$,
$\alpha\in\R$, then it converges cubically.
\end{remark}

\begin{proof} By covariant differentiation of~$\nabla^2\f\!$, the third
covariant differential of~$f$ at~$\Theta$ evaluated at the tangent
vectors $\Theta X$, $\Theta Y$, $\Theta Z\in T_\Theta$, $X$, $Y$,
$Z\in\so(n)$, is $$\eqalign{(\nabla^3\f)_\Theta(\Theta X,\Theta
Y,\Theta Z) =-\quarter\tr\bigl(&[\ad_Y\ad_ZH,N] -[\ad_Z\ad_YN,H]\cr
{}+[H,\ad_{\ad_YZ}N]-{}&[\ad_YH,\ad_ZN] +[\ad_YN,\ad_ZH]\bigr)
X.\cr}$$ If $H=\alpha N$, $\alpha\in\R$, then $(\nabla^3\f)
_\Theta(\rdot,\Theta X,\Theta X)=0$.  Therefore, the second order
terms on the right hand side of Eq.~(\ref{eq:quadproof3}) vanish at
the critical point.  The remark follows from the smoothness of~$f$.
\end{proof}

This remark illuminates how rapid convergence of Newton's method
applied to the function $f$ can be achieved in some instances.  If
$E_{ij}\in\so(n)$ is a matrix with entry $+1$ at element $(i,j)$, $-1$
at element $(j,i)$, and zero elsewhere, $X=\sum_{i<j}x^{ij}E_{ij}$,
$H=\diag(h_{1},\ldots,h_{n})$, and $N=\diag(\nu_1,\ldots,\nu_n)$, then
$$\eqalign{&(\nabla^3\f)_\Theta(\Theta E_{ij},\Theta X,\Theta X)={}\cr
&\qquad{-2}\sum_{k\neq i,j}x^{ik}x^{jk}\bigl((h_i\nu_j-h_j\nu_i)
+(h_j\nu_k-h_k\nu_j) +(h_k\nu_i-h_i\nu_k)\bigr).\cr}$$ If the $h_i$
are close to $\alpha \nu_i$, $\alpha\in\R$, for~all~$i$, then
$(\nabla^3\f)_\Theta(\rdot,\Theta X,\Theta X)$ may be small, yielding
a fast rate of quadratic convergence.

\begin{example}[Jacobi's method]\label{eg:jacobi}\ignorespaces Let
$\pi$ be the projection of a square matrix onto its diagonal, and let
$Q$ be as above.  Consider the maximization of the function
$f(\Theta)=\tr H\pi(H)$, $H=\Ad_{\Theta^\T}(Q)$, on the special
orthogonal group. This is equivalent to minimizing the sum of the
squares of the off-diagonal elements of~$H$ (Golub and
Van~Loan~\cite{GVL} derive the classical Jacobi method).  The gradient
of this function at~$\Theta$ is $2\Theta[H,\pi(H)]$~\cite{Chu:grad}.
By repeated covariant differentiation of~$f\!$, we find
$$\def\.{\kern-30pt}\eqalign{(\nabla\f)_I(X) &=-2\tr[H,\pi(H)]X\cr
(\nabla^2\f)_I(X,Y) &=-\tr\bigl([H,\ad_X\pi(H)] -[\ad_XH,\pi(H)]
-2[H,\pi(\ad_XH)]\bigr)Y\cr (\nabla^3\f)_I(X,Y,Z)
&=-\half\tr\bigl([\ad_Y\ad_ZH,\pi(H)] -[\ad_Z\ad_Y\pi(H),H]\cr
&\.{}+[H,\ad_{\ad_YZ}\pi(H)]-[\ad_YH,\ad_Z\pi(H)]+[\ad_Y\pi(H),\ad_ZH]\cr
&\.{}+2[H,\pi(\ad_Y\ad_ZH)]+2[H,\pi(\ad_Z\ad_YH)]\cr
&\.{}+2[\ad_YH,\pi(\ad_ZH)]-2[H,\ad_Y\pi(\ad_ZH)]\cr
&\.{}+2[\ad_ZH,\pi(\ad_YH)]-2[H,\ad_Z\pi(\ad_YH)]\bigr)X\cr}$$ where
$I$ is the identity matrix and $X$, $Y$, $Z\in\so(n)$.  It is easily
shown that if $[H,\pi(H)]=0$, i.e., if $H$ is diagonal, then
$(\nabla^3\f)_\Theta(\rdot,\Theta X,\Theta X)=0$ (n.b.\
$\pi(\ad_XH)=0$). Therefore, by the same argument as the proof of
Remark~\ref{rem:trHNcub}, Newton's method applied to the function~$\tr
H\pi(H)$ converges cubically.
\end{example}

\section{\spaceskip=.3333em minus.15em  % Make fit on default page size.
Conjugate gradient on Riemannian manifolds}
\label{sec:conjgrad}

The method of steepest descent provides an optimization technique
which is relatively inexpensive per iteration, but converges
relatively slowly.  Each step requires the computation of a geodesic
and a gradient direction.  Newton's method provides a technique which
is more costly both in terms of computational complexity and memory
requirements, but converges relatively rapidly.  Each step requires
the computation of a geodesic, a gradient, a second covariant
differential, and its inverse.  In this section we describe the
conjugate gradient method, which has the dual advantages of
algorithmic simplicity and superlinear convergence.

Hestenes and Stiefel~\cite{HestenesStiefel} first used conjugate
gradient methods to compute the solutions of linear equations, or,
equivalently, to compute the minimum of a quadratic form on~$\R^n$.
This approach can be modified to yield effective algorithms to compute
the minima of nonquadratic functions on~$\R^n$.  In particular,
Fletcher and Reeves~\cite{FletcherReeves} and Polak and
Ribi\`ere~\cite{Polak} provide algorithms based upon the assumption
that the second order Taylor expansion of the function to be minimized
sufficiently approximates this function near the minimum.  In
addition, Davidon, Fletcher, and Reeves developed the variable metric
methods~\cite{Fletcher,Polak}, but these will not be discussed here.
One noteworthy feature of conjugate gradient algorithms on~$\R^n$ is
that when the function in question is quadratic, they compute its
minimum in no more than $n$ steps.

The conjugate gradient method on Euclidean space is uncomplicated.
Given a function $f\colon\R^n\to\R$ with continuous second derivatives
and a local minimum at~$\xhat$, and an initial point $x_0\in\R^n$, the
algorithm is initialized by computing the (negative) gradient
direction $G_0=H_0=-(\grad\f)_{x_0}$.  The recursive part of the
algorithm involves (i)~a line minimization of~$f$ along the affine
space $x_i+tH_i$, $t\in\R$, where the minimum occurs at, say,
$t=\lambda_i$, (ii)~computation of the step
$x_{i+1}=x_i+\lambda_iH_i$, (iii)~computation of the (negative)
gradient $G_{i+1}=-(\grad\f)_{x_{i+1}}$, and (iv)~computation of the
next direction for line minimization, \begin{equation}\label{eq:EucCG}
H_{i+1}=G_{i+1}+\gamma_iH_i, \end{equation} where $\gamma_i$ is chosen
such that $H_i$ and $H_{i+1}$ conjugate with respect to the Hessian
matrix of~$f$ at~$\xhat$.  When $f$ is a quadratic form represented by
the symmetric positive definite matrix~$Q$, the conjugacy condition
becomes $H_i^\T QH_{i+1}=0$; therefore, $\gamma_i=-H_i^\T
QG_{i+1}/H_i^\T QH_i$.  It can be shown in this case that the sequence
of vectors $G_i$ are all mutually orthogonal and the sequence of
vectors $H_i$ are all mutually conjugate with respect to~$Q$.  Using
these facts, the computation of $\gamma_i$ may be simplified with the
observation that $\gamma_i =\|G_{i+1}\|^2/\|G_i\|^2$ (Fletcher-Reeves)
or $\gamma_i =(G_{i+1}-G_i)^\T G_{i+1}/\|G_i\|^2$ (Polak-Ribi\`ere).
When $f$ is not quadratic, it is assumed that its second order Taylor
expansion sufficiently approximates $f$ in a neighborhood of the
minimum, and the $\gamma_i$ are chosen so that $H_i$ and $H_{i+1}$ are
conjugate with respect to the matrix $(\partial^2\f/\partial
x^i\partial x^j)(x_{i+1})$ of second partial derivatives of~$f$
at~$x_{i+1}$.  It may be desirable to ``reset'' the algorithm by
setting $H_{i+1}=G_{i+1}$ every $r$th step (frequently, $r=n$) because
the conjugate gradient method does not, in general, converge in $n$
steps if the function $f$ is nonquadratic.  However, if $f$ is closely
approximated by a quadratic function, the reset strategy may be
expected to converge rapidly, whereas the unmodified algorithm may not
be.

Many of these ideas have straightforward generalizations in the
geometry of Riemannian manifolds; several of them have already
appeared.  We need only make the following definition.

\begin{definition} Given a tensor field $\omega$ of type~$(0,2)$ on~$M$
such that for $p$ in~$M$, $\omega_p\colon T_p\times T_p\to\R$ is a
symmetric bilinear form, the tangent vectors $X$ and $Y$ in~$T_p$ are
said to be {\it $\omega_p$-conjugate\/} or {\it conjugate with respect
to $\omega_p$} if $\omega_p(X,Y)=0$.
\end{definition}

An outline of the conjugate gradient method on Riemannian manifolds
may now be given.  Let $M$ be an $n$-dimensional Riemannian manifold
with Riemannian structure $g$ and Levi-Civita connection $\nabla$,
and let $f\in C^\infty(M)$ have a local minimum at~$\phat$.  As in the
conjugate gradient method on Euclidean space, choose an initial point
$p_0$ in~$M$ and compute the (negative) gradient directions
$G_0=H_0=-(\grad\f)_{p_0}$ in~$T_{p_0}$.  The recursive part of the
algorithm involves minimizing $f$ along the geodesic
$t\mapsto\exp_{p_i}tH_i$, $t\in\R$, making a step along the geodesic
to the minimum point $p_{i+1}=\exp\lambda_iH_i$, computing
$G_{i+1}=-(\grad\f)_{p_{i+1}}$, and computing the next direction in
$T_{p_{i+1}}$ for geodesic minimization. This direction is given by
the formula \begin{equation}\label{eq:RieCG}
H_{i+1}=G_{i+1}+\gamma_i\tau H_i, \end{equation} where $\tau$ is the
parallel translation with respect to the geodesic step from~$p_i$
to~$p_{i+1}$, and $\gamma_i$ is chosen such that $\tau H_i$ and
$H_{i+1}$ are $(\nabla^2\f)_{p_{i+1}}$-conjugate, i.e.,
\begin{equation}\label{eq:gammadef} \gamma_i =-{(\nabla^2\f)_{p_{i+1}}
(\tau H_i,G_{i+1})\over (\nabla^2\f)_{p_{i+1}}(\tau H_i,\tau H_i)}.
\end{equation}

Eq.~(\ref{eq:gammadef}) is, in general, expensive to use because the
second covariant differential of~$f$ appears.  However, we can use the
Taylor expansion of~$df$ about $p_{i+1}$ to compute an efficient
approximation of~$\gamma_i$.  By the fact that
$p_i=\exp_{p_{i+1}}(-\lambda_i\tau H_i)$ and by Eq.~(\ref{eq:taylort}),
we have $$\tau df_{p_i} =\tau df_{\exp_{p_{i+1}}(-\lambda_i\tau H_i)}
=df_{p_{i+1}} -\lambda_i(\nabla^2\f)_{p_{i+1}}(\rdot,\tau H_i) +\hot$$
Therefore, the numerator of the right hand side of
Eq.~(\ref{eq:gammadef}) multiplied by the step size $\lambda_i$ can be
approximated by the equation $$\eqalign{\lambda_i
(\nabla^2\f)_{p_{i+1}} (\tau H_i, G_{i+1}) &=df_{p_{i+1}}(G_{i+1})
-(\tau df_{p_i})(G_{i+1})\cr &=-\(G_{i+1} -\tau G_i, G_{i+1}\)\cr}$$
because, by definition, $G_i=-(\grad\f)_{p_i}$, $i=0$, $1$,~\dots, and
for any $X$ in~$T_{p_{i+1}}$, $(\tau df_{p_i})(X)
=df_{p_i}(\tau^{-1}X) =\((\grad\f)_{p_i}, \tau^{-1}X\)
=\(\tau(\grad\f)_{p_i}, X\)$. Similarly, the denominator of the right
hand side of Eq.~(\ref{eq:gammadef}) multiplied by $\lambda_i$ can be
approximated by the equation $$\eqalign{\lambda_i
(\nabla^2\f)_{p_{i+1}} (\tau H_i,\tau H_i) &=df_{p_{i+1}}(\tau H_i)
-(\tau df_{p_i})(\tau H_i)\cr &=\(G_i,H_i\)\cr}$$ because
$\(G_{i+1},\tau H_i\)=0$ by the assumption that $f$ is minimized along
the geodesic $t\mapsto\exp tH_i$ at~$t=\lambda_i$.  Combining these
two approximations with Eq.~(\ref{eq:gammadef}), we obtain a formula
for~$\gamma_i$ that is relatively inexpensive to compute:
\begin{equation}\label{eq:gammaeff} \gamma_i ={\(G_{i+1} -\tau
G_i,G_{i+1}\)\over \(G_i,H_i\)}. \end{equation} Of course, as the
connection $\nabla$ is compatible with the metric $g$, the denominator
of Eq.~(\ref{eq:gammaeff}) may be replaced, if desired, by $\(\tau
G_i,\tau H_i\)$.

The conjugate gradient method may now be presented in full.

\begin{algorithm}[Conjugate gradient
method]\label{al:cgman}\ignorespaces Let $M$ be a complete Riemannian
manifold with Riemannian structure $g$ and Levi-Civita connection
$\nabla$, and let $f$ be a $C^\infty$ function on~$M$.
\begin{steps}
\step[0.] Select $p_0\in M$,
     compute $G_0=H_0=-(\grad\f)_{p_0}$, and set $i=0$.
\step[1.] Compute $\lambda_i$ such that
     $$f(\exp_{p_i}\lambda_iH_i)\le f(\exp_{p_i}\lambda H_i)$$ for all
     $\lambda\ge0$.
\step[2.] Set $p_{i+1}=\exp_{p_i}\lambda_iH_i$.
\step[3.] Set $$\eqalign{G_{i+1}
     &=-(\grad\f)_{p_{i+1}},\cr H_{i+1} &=G_{i+1} +\gamma_i\tau H_i,
     \qquad\gamma_i=\smash{{\(G_{i+1}-\tau G_i,G_{i+1}\)\over
     \(G_i,H_i\)}},\cr \noalign{\vskip2pt}}$$ where $\tau$ is the
     parallel translation with respect to the geodesic from~$p_i$
     to~$p_{i+1}$.  If $i\equiv n-1\ (\bmod\ n)$, set
     $H_{i+1}=G_{i+1}$.  Increment $i$, and go~to Step~1.
\end{steps}
\end{algorithm}

\begin{theorem}\label{th:cgsuper}\ignorespaces Let $f\in C^\infty(M)$
have a nondegenerate critical point at $\phat$ such that the Hessian
$(d^2\f)_\phat$ is positive definite.  Let $p_i$ be a sequence of
points in~$M$ generated by Algorithm~\ref{al:cgman} converging
to~$\phat$.  Then there exists a constant $\theta>0$ and an integer
$N$ such that for~all $i\ge N$,
$$d(p_{i+n},\phat)\le \theta d^2(p_i,\phat).$$
\end{theorem}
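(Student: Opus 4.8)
The plan is to prove the stronger \emph{$n$-step quadratic} estimate by comparing one complete reset cycle of Algorithm~\ref{al:cgman} against the classical linear conjugate gradient iteration applied to the quadratic model of $f$ at $\phat$, which is at the minimum after $n$ steps. First I would shrink attention, as in the proof of Theorem~\ref{th:linearconv}, to a convex normal ball $\Ball$ on which $(\nabla^2\f)_p$ obeys two-sided bounds $k\|X\|^2\le(\nabla^2\f)_p(X,X)\le K\|X\|^2$ and on which $\nabla^2\f$ and $\nabla^3\f$ are bounded; since $p_i\to\phat$ is assumed, all but finitely many iterates lie in $\Ball$. It suffices to bound $d(p_{j+n},\phat)$ by a multiple of $d(p_j,\phat)^2$ when $j$ is a reset index (one with $j\equiv0\ (\bmod\ n)$, so that Step~3 has just set $H_j=G_j$); the statement for arbitrary $i\ge N$ then follows from the comparability of $d(p_i,\phat)$ within any window of $n$ iterates, which is itself a consequence of the monotone decrease $f(p_{i+1})\le f(p_i)$ produced by the exact line search together with the $\half k\,d^2\le f-f(\phat)\le\half K\,d^2$ bounds of~(\ref{eq:fbounds}).

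Work in the normal chart $\exp_\phat$ on $\Ball$ and write $f(\exp_\phat X)=f(\phat)+\half(d^2\f)_\phat(X,X)+R(X)$, where the Taylor remainder satisfies $R=O(\|X\|^3)$, $\nabla R=O(\|X\|^2)$, $\nabla^2R=O(\|X\|)$. Let $r=d(p_j,\phat)$. The heart of the proof is a one-step perturbation estimate: for each $l=0,\dots,n-1$ the quantities generated in the $l$th step of the cycle --- the geodesic line-search parameter $\lambda_{j+l}$, the new point $p_{j+l+1}$, the transported direction $\tau H_{j+l}$, the gradient $G_{j+l+1}$, the coefficient $\gamma_{j+l}$ of~(\ref{eq:gammaeff}), and the new direction $H_{j+l+1}$ --- agree, after being pulled back to $T_\phat$, with the corresponding quantities of the \emph{Euclidean} conjugate gradient iteration run on the pure quadratic $q(X)=\half(d^2\f)_\phat(X,X)$ from the data $(X_j,-\nabla q(X_j))$, up to an absolute error of size $O(r^2)$. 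Establishing this uses three ingredients already available: the Taylor formulas of Remarks~\ref{rem:taylorf}, \ref{rem:mvt}, and~\ref{rem:taylort} to control $\grad\f$, $\nabla^2\f$, and in particular the approximation underlying Eq.~(\ref{eq:gammaeff}) (so that $\gamma_{j+l}$ is the $q$-conjugacy coefficient up to $O(r)$); the bound $\|(\grad\f)_p\|=O(d(p,\phat))$ together with $k\|X\|^2\le(\nabla^2\f)_p(X,X)$, which keeps every denominator appearing in the recursion --- the line-search curvature term and $\(G_i,H_i\)$ --- bounded below by a fixed multiple of (current distance)$^2$, so that relative $O(r)$ errors become absolute $O(r^2)$ errors without amplification; and Karcher's comparison~(\ref{eq:Karcher}), used exactly as in the proof of Theorem~\ref{th:newtonquad}, to trade each geodesic displacement and each parallel transport for the corresponding vector operation in $T_\phat$ at the cost of a curvature-weighted $O(\cdot)\cdot\epsilon^2$ term (this is the ``extra term containing the Riemannian curvature tensor''). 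Since the cycle has the fixed length $n$ and each step perturbs quantities of size $O(r)$ by relative $O(r)$, the errors accumulate to $O(r^2)$; as the Euclidean conjugate gradient run on $q$ is at the origin after $n$ steps, the true iterate $p_{j+n}$ must lie within $O(r^2)=O(d^2(p_j,\phat))$ of $\phat$, which is the required bound.

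The main obstacle is this one-step perturbation estimate, and inside it the bookkeeping that $(\nabla^2\f)_p$ itself --- not merely the gradient --- drifts by $O(d(p,\phat))$ from the parallel translate of $(d^2\f)_\phat$, so that the conjugacy enforced by the true algorithm is only $(d^2\f)_\phat$-conjugacy up to $O(r)$; one must check that this degrades the finite-termination property of linear conjugate gradient only to an $O(r^2)$ residual rather than destroying it. This is the Riemannian counterpart of the classical analysis of restarted conjugate gradient methods near a minimum (cf.\ Polak~\cite{Polak}); the geometry contributes only the curvature corrections from~(\ref{eq:Karcher}) and the need to carry all vectors along geodesics by $\tau$. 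A secondary, routine point is the promotion of the reset-to-reset bound to the stated inequality for every $i\ge N$ via the window-comparability of $d(p_i,\phat)$ noted above.
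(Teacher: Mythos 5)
Your strategy is recognizably parallel to the paper's but diverges at the key juncture. The paper also passes to normal coordinates at $\phat$, but it then simply \emph{cites} Polak for the $n$-step quadratic convergence of the Euclidean restarted conjugate gradient method applied to the pushed-forward function $\nupushf=f\circ\nu^{-1}$, and spends its effort only on showing that Algorithm~\ref{al:cgman}, read in those coordinates, differs from that Euclidean iteration by third-order terms (via the Gauss Lemma/Jacobi-equation expansion $\|d\exp(tY)\|^2=t^2-\third Kt^4+\hot$). You instead compare one reset cycle directly with \emph{exact linear} CG on the quadratic model $q(X)=\half(d^2\f)_\phat(X,X)$ and propose to carry out the full perturbation induction yourself. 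That is legitimate and more self-contained, but it means you have taken on precisely the part of the argument the paper outsources, and your sketch does not discharge it: the claim that the denominators $\(G_i,H_i\)$ and the line-search curvature terms are bounded below by a fixed multiple of the \emph{current} distance squared is not enough, because in exact linear CG the intermediate gradients can be much smaller than $r=d(p_j,\phat)$, in which case an absolute $O(r^2)$ error in $G_{j+l}$ is no longer a small relative error and the computed $\gamma_{j+l}$ and $H_{j+l+1}$ can be corrupted; controlling this degeneracy is the genuinely delicate step in the Cohen--Polak theorem, and you acknowledge it only as ``bookkeeping.'' Either prove that induction in detail or, as the paper does, reduce to the Euclidean statement and cite it.

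A second, independent problem: your reduction from reset indices to arbitrary $i\ge N$ does not work as described. Monotone decrease of $f$ together with (\ref{eq:fbounds}) yields $d(p_i,\phat)\le(K/k)^{1/2}\,d(p_j,\phat)$ for $i\ge j$, i.e., it bounds \emph{later} distances by \emph{earlier} ones; your reduction needs the reverse inequality $d(p_j,\phat)\le C\,d(p_i,\phat)$ for the reset index $j\le i$, and no such uniform bound can hold, since the whole content of the theorem is that the distance may collapse quadratically within a single window of $n$ steps. The paper sidesteps this because the Euclidean result it invokes is already stated for every index $i$. If you keep your route, you must either run the one-cycle estimate from an arbitrary starting index or be content with the reset-subsequence form of the conclusion.
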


Note that linear convergence is already guaranteed by
Theorem~\ref{th:linearconv}.

\begin{proof} If $p_j=\phat$ for some integer $j$, the assertion
becomes trivial; assume otherwise.  Recall that if $X_1$,~\dots, $X_n$
is some basis for $T_\phat$, then the map $\exp_\phat(a^1X_1 +\cdots+
a^nX_n) \buildrel\nu\over\to (a^1,\ldots,a^n)$ defines a set of normal
coordinates at~$\phat$.  Let $N_\phat$ be a normal neighborhood
of~$\phat$ on which the normal coordinates $\nu=(x^1,\ldots,x^n)$ are
defined. Consider the map $\nupushf\buildrel\smash
{\scriptscriptstyle\rm def}\over= f\circ\nu^{-1}\colon\R^n\to\R$.  By
the smoothness of $f$ and $\exp$, $\nupushf$ has a critical point at
$0\in\R^n$ such that the Hessian matrix of~$\nupushf$ at~$0$ is
positive definite.  Indeed, by the fact that $(d\exp)_0=\id$, the
$ij$th component of the Hessian matrix of $\nupushf$ at~$0$ is given
by $(d^2\f)_\phat(X_i,X_j)$.

Therefore, there exists a neighborhood $U$ of $0\in\R^n$, a constant
$\theta'>0$, and an integer $N$, such that for any initial point
$x_0\in U$, the conjugate gradient method on Euclidean space (with
resets) applied to the function $\nupushf$ yields a sequence of points
$x_i$ converging to~$0$ such that for~all $i\ge N$,
$$\|x_{i+n}\|\le\theta'\|x_i\|^2.$$ See Polak~\cite[p.~260ff\/]{Polak}
for a proof of this fact.  Let $x_0=\nu(p_0)$ in~$U$ be an initial
point.  Because $\exp$ is not an isometry, Algorithm~\ref{al:cgman}
yields a different sequence of points in~$\R^n$ than the classical
conjugate gradient method on $\R^n$ (upon equating points in a
neighborhood of~$\phat\in M$ with points in a neighborhood
of~$0\in\R^n$ via the normal coordinates).

Nevertheless, the amount by which $\exp$ fails to preserve inner
products can be quantified via the Gauss Lemma and Jacobi's equation;
see, e.g., Cheeger and Ebin~\cite{Cheegin}, or the appendices of
Karcher~\cite{Karcher}.  Let $t$ be small, and let $X\in T_\phat$ and
$Y\in T_{tX}(T_\phat)\cong T_\phat$ be orthonormal tangent vectors.
The amount by which the exponential map changes the length of tangent
vectors is approximated by the Taylor expansion $$\|d\exp(tY)\|^2 =t^2
-\third Kt^4 +\hot$$ where $K$ is the sectional curvature of~$M$ along
the section in~$T_\phat$ spanned by $X$ and $Y$.  Therefore, near
$\phat$ Algorithm~\ref{al:cgman} differs from the conjugate gradient
method on~$\R^n$ applied to the function $\nupushf$ only by third
order and higher terms.  Thus both algorithms have the same rate of
convergence.  The theorem follows.
\end{proof}

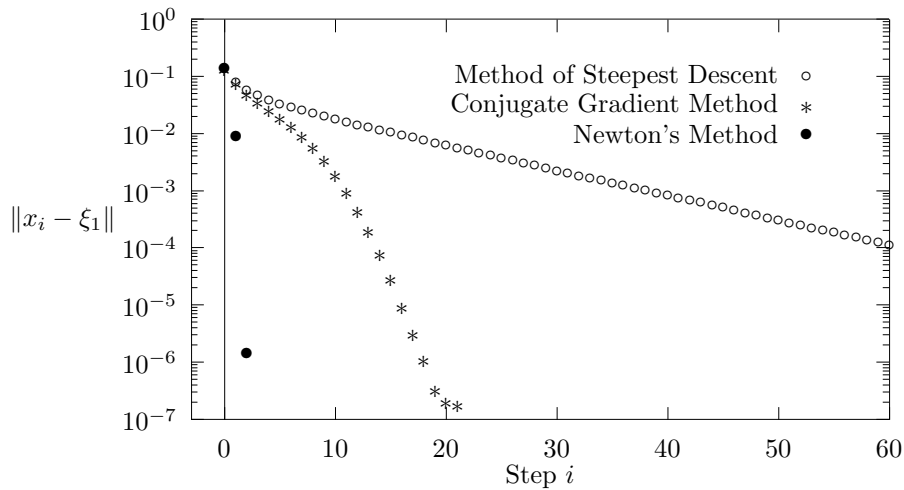
\begin{figure}
\vskip-.25in
\begingroup  % GNUPlot for $S^{20}$
% Abbreviations
\let\p=\put
\let\r=\rule
\def\c{\circle{12}}
\def\D{\raisebox{-1.2pt}{\makebox(0,0){$\Diamond$}}}
\def\s{\circle*{18}}
% GNUPLOT: LaTeX picture
\setlength{\unitlength}{0.240900pt}
\ifx\plotpoint\undefined\newsavebox{\plotpoint}\fi
\begin{picture}(1424,900)(0,0)
\tenrm
\sbox{\plotpoint}{\r[-0.175pt]{0.350pt}{0.350pt}}%
\p(316,158){\r[-0.175pt]{0.350pt}{151.526pt}}
\p(264,158){\r[-0.175pt]{4.818pt}{0.350pt}}
\p(242,158){\makebox(0,0)[r]{$10^{\gnulog 1e-07}$}}
\p(1340,158){\r[-0.175pt]{4.818pt}{0.350pt}}
\p(264,185){\r[-0.175pt]{2.409pt}{0.350pt}}
\p(1350,185){\r[-0.175pt]{2.409pt}{0.350pt}}
\p(264,201){\r[-0.175pt]{2.409pt}{0.350pt}}
\p(1350,201){\r[-0.175pt]{2.409pt}{0.350pt}}
\p(264,212){\r[-0.175pt]{2.409pt}{0.350pt}}
\p(1350,212){\r[-0.175pt]{2.409pt}{0.350pt}}
\p(264,221){\r[-0.175pt]{2.409pt}{0.350pt}}
\p(1350,221){\r[-0.175pt]{2.409pt}{0.350pt}}
\p(264,228){\r[-0.175pt]{2.409pt}{0.350pt}}
\p(1350,228){\r[-0.175pt]{2.409pt}{0.350pt}}
\p(264,234){\r[-0.175pt]{2.409pt}{0.350pt}}
\p(1350,234){\r[-0.175pt]{2.409pt}{0.350pt}}
\p(264,239){\r[-0.175pt]{2.409pt}{0.350pt}}
\p(1350,239){\r[-0.175pt]{2.409pt}{0.350pt}}
\p(264,244){\r[-0.175pt]{2.409pt}{0.350pt}}
\p(1350,244){\r[-0.175pt]{2.409pt}{0.350pt}}
\p(264,248){\r[-0.175pt]{4.818pt}{0.350pt}}
\p(242,248){\makebox(0,0)[r]{$10^{\gnulog 1e-06}$}}
\p(1340,248){\r[-0.175pt]{4.818pt}{0.350pt}}
\p(264,275){\r[-0.175pt]{2.409pt}{0.350pt}}
\p(1350,275){\r[-0.175pt]{2.409pt}{0.350pt}}
\p(264,291){\r[-0.175pt]{2.409pt}{0.350pt}}
\p(1350,291){\r[-0.175pt]{2.409pt}{0.350pt}}
\p(264,302){\r[-0.175pt]{2.409pt}{0.350pt}}
\p(1350,302){\r[-0.175pt]{2.409pt}{0.350pt}}
\p(264,311){\r[-0.175pt]{2.409pt}{0.350pt}}
\p(1350,311){\r[-0.175pt]{2.409pt}{0.350pt}}
\p(264,318){\r[-0.175pt]{2.409pt}{0.350pt}}
\p(1350,318){\r[-0.175pt]{2.409pt}{0.350pt}}
\p(264,324){\r[-0.175pt]{2.409pt}{0.350pt}}
\p(1350,324){\r[-0.175pt]{2.409pt}{0.350pt}}
\p(264,329){\r[-0.175pt]{2.409pt}{0.350pt}}
\p(1350,329){\r[-0.175pt]{2.409pt}{0.350pt}}
\p(264,334){\r[-0.175pt]{2.409pt}{0.350pt}}
\p(1350,334){\r[-0.175pt]{2.409pt}{0.350pt}}
\p(264,338){\r[-0.175pt]{4.818pt}{0.350pt}}
\p(242,338){\makebox(0,0)[r]{$10^{\gnulog 1e-05}$}}
\p(1340,338){\r[-0.175pt]{4.818pt}{0.350pt}}
\p(264,365){\r[-0.175pt]{2.409pt}{0.350pt}}
\p(1350,365){\r[-0.175pt]{2.409pt}{0.350pt}}
\p(264,381){\r[-0.175pt]{2.409pt}{0.350pt}}
\p(1350,381){\r[-0.175pt]{2.409pt}{0.350pt}}
\p(264,392){\r[-0.175pt]{2.409pt}{0.350pt}}
\p(1350,392){\r[-0.175pt]{2.409pt}{0.350pt}}
\p(264,401){\r[-0.175pt]{2.409pt}{0.350pt}}
\p(1350,401){\r[-0.175pt]{2.409pt}{0.350pt}}
\p(264,408){\r[-0.175pt]{2.409pt}{0.350pt}}
\p(1350,408){\r[-0.175pt]{2.409pt}{0.350pt}}
\p(264,414){\r[-0.175pt]{2.409pt}{0.350pt}}
\p(1350,414){\r[-0.175pt]{2.409pt}{0.350pt}}
\p(264,419){\r[-0.175pt]{2.409pt}{0.350pt}}
\p(1350,419){\r[-0.175pt]{2.409pt}{0.350pt}}
\p(264,423){\r[-0.175pt]{2.409pt}{0.350pt}}
\p(1350,423){\r[-0.175pt]{2.409pt}{0.350pt}}
\p(264,428){\r[-0.175pt]{4.818pt}{0.350pt}}
\p(242,428){\makebox(0,0)[r]{$10^{\gnulog 1e-04}$}}
\p(1340,428){\r[-0.175pt]{4.818pt}{0.350pt}}
\p(264,455){\r[-0.175pt]{2.409pt}{0.350pt}}
\p(1350,455){\r[-0.175pt]{2.409pt}{0.350pt}}
\p(264,470){\r[-0.175pt]{2.409pt}{0.350pt}}
\p(1350,470){\r[-0.175pt]{2.409pt}{0.350pt}}
\p(264,482){\r[-0.175pt]{2.409pt}{0.350pt}}
\p(1350,482){\r[-0.175pt]{2.409pt}{0.350pt}}
\p(264,490){\r[-0.175pt]{2.409pt}{0.350pt}}
\p(1350,490){\r[-0.175pt]{2.409pt}{0.350pt}}
\p(264,497){\r[-0.175pt]{2.409pt}{0.350pt}}
\p(1350,497){\r[-0.175pt]{2.409pt}{0.350pt}}
\p(264,504){\r[-0.175pt]{2.409pt}{0.350pt}}
\p(1350,504){\r[-0.175pt]{2.409pt}{0.350pt}}
\p(264,509){\r[-0.175pt]{2.409pt}{0.350pt}}
\p(1350,509){\r[-0.175pt]{2.409pt}{0.350pt}}
\p(264,513){\r[-0.175pt]{2.409pt}{0.350pt}}
\p(1350,513){\r[-0.175pt]{2.409pt}{0.350pt}}
\p(264,517){\r[-0.175pt]{4.818pt}{0.350pt}}
\p(242,517){\makebox(0,0)[r]{$10^{\gnulog 1e-03}$}}
\p(1340,517){\r[-0.175pt]{4.818pt}{0.350pt}}
\p(264,544){\r[-0.175pt]{2.409pt}{0.350pt}}
\p(1350,544){\r[-0.175pt]{2.409pt}{0.350pt}}
\p(264,560){\r[-0.175pt]{2.409pt}{0.350pt}}
\p(1350,560){\r[-0.175pt]{2.409pt}{0.350pt}}
\p(264,572){\r[-0.175pt]{2.409pt}{0.350pt}}
\p(1350,572){\r[-0.175pt]{2.409pt}{0.350pt}}
\p(264,580){\r[-0.175pt]{2.409pt}{0.350pt}}
\p(1350,580){\r[-0.175pt]{2.409pt}{0.350pt}}
\p(264,587){\r[-0.175pt]{2.409pt}{0.350pt}}
\p(1350,587){\r[-0.175pt]{2.409pt}{0.350pt}}
\p(264,593){\r[-0.175pt]{2.409pt}{0.350pt}}
\p(1350,593){\r[-0.175pt]{2.409pt}{0.350pt}}
\p(264,599){\r[-0.175pt]{2.409pt}{0.350pt}}
\p(1350,599){\r[-0.175pt]{2.409pt}{0.350pt}}
\p(264,603){\r[-0.175pt]{2.409pt}{0.350pt}}
\p(1350,603){\r[-0.175pt]{2.409pt}{0.350pt}}
\p(264,607){\r[-0.175pt]{4.818pt}{0.350pt}}
\p(242,607){\makebox(0,0)[r]{$10^{\gnulog 1e-02}$}}
\p(1340,607){\r[-0.175pt]{4.818pt}{0.350pt}}
\p(264,634){\r[-0.175pt]{2.409pt}{0.350pt}}
\p(1350,634){\r[-0.175pt]{2.409pt}{0.350pt}}
\p(264,650){\r[-0.175pt]{2.409pt}{0.350pt}}
\p(1350,650){\r[-0.175pt]{2.409pt}{0.350pt}}
\p(264,661){\r[-0.175pt]{2.409pt}{0.350pt}}
\p(1350,661){\r[-0.175pt]{2.409pt}{0.350pt}}
\p(264,670){\r[-0.175pt]{2.409pt}{0.350pt}}
\p(1350,670){\r[-0.175pt]{2.409pt}{0.350pt}}
\p(264,677){\r[-0.175pt]{2.409pt}{0.350pt}}
\p(1350,677){\r[-0.175pt]{2.409pt}{0.350pt}}
\p(264,683){\r[-0.175pt]{2.409pt}{0.350pt}}
\p(1350,683){\r[-0.175pt]{2.409pt}{0.350pt}}
\p(264,688){\r[-0.175pt]{2.409pt}{0.350pt}}
\p(1350,688){\r[-0.175pt]{2.409pt}{0.350pt}}
\p(264,693){\r[-0.175pt]{2.409pt}{0.350pt}}
\p(1350,693){\r[-0.175pt]{2.409pt}{0.350pt}}
\p(264,697){\r[-0.175pt]{4.818pt}{0.350pt}}
\p(242,697){\makebox(0,0)[r]{$10^{\gnulog 1e-01}$}}
\p(1340,697){\r[-0.175pt]{4.818pt}{0.350pt}}
\p(264,724){\r[-0.175pt]{2.409pt}{0.350pt}}
\p(1350,724){\r[-0.175pt]{2.409pt}{0.350pt}}
\p(264,740){\r[-0.175pt]{2.409pt}{0.350pt}}
\p(1350,740){\r[-0.175pt]{2.409pt}{0.350pt}}
\p(264,751){\r[-0.175pt]{2.409pt}{0.350pt}}
\p(1350,751){\r[-0.175pt]{2.409pt}{0.350pt}}
\p(264,760){\r[-0.175pt]{2.409pt}{0.350pt}}
\p(1350,760){\r[-0.175pt]{2.409pt}{0.350pt}}
\p(264,767){\r[-0.175pt]{2.409pt}{0.350pt}}
\p(1350,767){\r[-0.175pt]{2.409pt}{0.350pt}}
\p(264,773){\r[-0.175pt]{2.409pt}{0.350pt}}
\p(1350,773){\r[-0.175pt]{2.409pt}{0.350pt}}
\p(264,778){\r[-0.175pt]{2.409pt}{0.350pt}}
\p(1350,778){\r[-0.175pt]{2.409pt}{0.350pt}}
\p(264,783){\r[-0.175pt]{2.409pt}{0.350pt}}
\p(1350,783){\r[-0.175pt]{2.409pt}{0.350pt}}
\p(264,787){\r[-0.175pt]{4.818pt}{0.350pt}}
\p(242,787){\makebox(0,0)[r]{$10^{\gnulog 1e+00}$}}
\p(1340,787){\r[-0.175pt]{4.818pt}{0.350pt}}
\p(316,158){\r[-0.175pt]{0.350pt}{4.818pt}} \p(316,113){\makebox(0,0){$0$}}
\p(316,767){\r[-0.175pt]{0.350pt}{4.818pt}}
\p(490,158){\r[-0.175pt]{0.350pt}{4.818pt}} \p(490,113){\makebox(0,0){$10$}}
\p(490,767){\r[-0.175pt]{0.350pt}{4.818pt}}
\p(664,158){\r[-0.175pt]{0.350pt}{4.818pt}} \p(664,113){\makebox(0,0){$20$}}
\p(664,767){\r[-0.175pt]{0.350pt}{4.818pt}}
\p(838,158){\r[-0.175pt]{0.350pt}{4.818pt}} \p(838,113){\makebox(0,0){$30$}}
\p(838,767){\r[-0.175pt]{0.350pt}{4.818pt}}
\p(1012,158){\r[-0.175pt]{0.350pt}{4.818pt}} \p(1012,113){\makebox(0,0){$40$}}
\p(1012,767){\r[-0.175pt]{0.350pt}{4.818pt}}
\p(1186,158){\r[-0.175pt]{0.350pt}{4.818pt}} \p(1186,113){\makebox(0,0){$50$}}
\p(1186,767){\r[-0.175pt]{0.350pt}{4.818pt}}
\p(1360,158){\r[-0.175pt]{0.350pt}{4.818pt}} \p(1360,113){\makebox(0,0){$60$}}
\p(1360,767){\r[-0.175pt]{0.350pt}{4.818pt}}
\p(264,158){\r[-0.175pt]{264.026pt}{0.350pt}}
\p(1360,158){\r[-0.175pt]{0.350pt}{151.526pt}}
\p(264,787){\r[-0.175pt]{264.026pt}{0.350pt}}
\p(-21,472){\makebox(0,0)[l]{\shortstack{$\|x_i-\xi_1\|$}}}
\p(812,68){\makebox(0,0){Step $i$}}
\p(264,158){\r[-0.175pt]{0.350pt}{151.526pt}}
\p(1186,697){\makebox(0,0)[r]{Method of Steepest Descent}}
\p(1230,697){\c} \p(316,710){\c} \p(334,688){\c} \p(351,676){\c}
\p(368,667){\c} \p(386,660){\c} \p(403,654){\c} \p(421,649){\c}
\p(438,644){\c} \p(455,639){\c} \p(473,634){\c} \p(490,630){\c}
\p(508,625){\c} \p(525,621){\c} \p(542,617){\c} \p(560,613){\c}
\p(577,609){\c} \p(595,605){\c} \p(612,601){\c} \p(629,597){\c}
\p(647,593){\c} \p(664,589){\c} \p(682,585){\c} \p(699,581){\c}
\p(716,577){\c} \p(734,573){\c} \p(751,569){\c} \p(769,565){\c}
\p(786,561){\c} \p(803,557){\c} \p(821,553){\c} \p(838,549){\c}
\p(855,545){\c} \p(873,541){\c} \p(890,538){\c} \p(908,534){\c}
\p(925,530){\c} \p(942,526){\c} \p(960,522){\c} \p(977,518){\c}
\p(995,514){\c} \p(1012,510){\c} \p(1029,506){\c} \p(1047,502){\c}
\p(1064,499){\c} \p(1082,495){\c} \p(1099,491){\c} \p(1116,487){\c}
\p(1134,483){\c} \p(1151,479){\c} \p(1169,475){\c} \p(1186,471){\c}
\p(1203,467){\c} \p(1221,463){\c} \p(1238,459){\c} \p(1256,455){\c}
\p(1273,452){\c} \p(1290,448){\c} \p(1308,444){\c} \p(1325,440){\c}
\p(1343,436){\c} \p(1360,432){\c} \p(1186,652){\makebox(0,0)[r]{Conjugate
Gradient Method}} \p(1230,652){\D} \p(316,710){\D} \p(334,688){\D}
\p(351,672){\D} \p(368,659){\D} \p(386,647){\D} \p(403,634){\D}
\p(421,621){\D} \p(438,605){\D} \p(455,588){\D} \p(473,567){\D}
\p(490,544){\D} \p(508,518){\D} \p(525,488){\D} \p(542,456){\D}
\p(560,420){\D} \p(577,380){\D} \p(595,337){\D} \p(612,295){\D}
\p(629,254){\D} \p(647,207){\D} \p(664,188){\D} \p(682,183){\D}
\p(1186,607){\makebox(0,0)[r]{Newton's Method}} \p(1230,607){\s}
\p(316,710){\s} \p(334,603){\s} \p(351,263){\s}
\end{picture}
\endgroup  % GNUPlot for $S^{20}$
\vskip-.25in
\caption[]{\protect\small\label{fig:raysphere}\ignorespaces
Maximization of Rayleigh's quotient $x^\T Qx$
on~$S^{20}\subset\R^{21}$, where $Q=\diag(21,\ldots,1)$.  The $i$th
iterate is $x_i$, and $\xi_1$ is the eigenvector corresponding to the
largest eigenvalue of~$Q$.  Algorithm~\protect\ref{al:newtonray} was
used for Newton's method and Algorithm~\protect\ref{al:raysphere} was
used for the conjugate gradient method.}
\end{figure}

\begin{example}[Rayleigh's quotient on the
sphere]\label{eg:rayqcg}\ignorespaces Applied to Rayleigh's quotient
on the sphere, the conjugate gradient method provides an efficient
technique to compute the eigenvectors corresponding to the largest or
smallest eigenvalue of a real symmetric matrix.  Let $S^{n-1}$ and
$\rho(x)=x^\T Qx$ be as in Examples \ref{eg:rayqgrad} and
\ref{eg:newtonray}.  From Algorithm~\ref{al:cgman}, we have the
following algorithm.

\begin{algorithm}[|CG| method for the extreme
eigenvalue/eigenvector]\label{al:raysphere}\ignorespaces Let $Q$ be a
real symmetric \hbox{$n$-by-$n$} matrix.
\begin{steps}
\step[0.] Select $x_0$ in~$\R^n$ such that $x_0^\T x_0=1$, compute
    $G_0=H_0=(Q-\rho(x_0)I)x_0$, and set $i=0$.
\step[1.] Compute $c$, $s$, and $v=1-c=s^2/(1+c)$, such that
    $\rho(x_ic+h_is)$ is maximized, where $c^2+s^2=1$ and
    $h_i=H_i/\|H_i\|$.  This can be accomplished by geodesic
    minimization, or by the formulae $$\eqalign{c
    &=\bigl(\half(1+b/r)\bigr)^\half\cr s &=a/(2rc)\cr} \quad\hbox{if
    $b\geq0$,} \quad\hbox{or}\quad \eqalign{s
    &=\bigl(\half(1-b/r)\bigr)^\half\cr c &=a/(2rs)\cr} \quad\hbox{if
    $b\leq0$,}$$ where $a=2x_i^\T Qh_i$, $b=x_i^\T Qx_i -h_i^\T Qh_i$,
    and $r=\surd(a^2+b^2)$.
\step[2.] Set $$x_{i+1}=x_ic+h_is,\quad \tau H_i=H_ic -
    x_i\|H_i\|s,\quad \tau G_i=G_i -(h_i^\T G_i)(x_is+h_iv).$$
\step[3.] Set $$\eqalign{G_{i+1} &=\bigl(Q-\rho(x_{i+1})I\bigr)x_{i+1},\cr
    H_{i+1} &=G_{i+1} +\gamma_i\tau H_i,
    \qquad\gamma_i=\smash{{(G_{i+1}-\tau G_i)^\T G_{i+1}\over G_i^\T
    H_i}}.\cr\noalign{\vskip2pt}}$$ If $i\equiv n-1\ (\bmod\ n)$, set
    $H_{i+1}=G_{i+1}$.  Increment $i$, and go~to Step~1.
\end{steps}
\end{algorithm}

The convergence rate of this algorithm to the eigenvector
corresponding to the largest eigenvalue of~$Q$ is given by
Theorem~\ref{th:cgsuper}.  This algorithm costs one matrix-vector
multiplication (relatively inexpensive when $Q$ is sparse), one
geodesic minimization or computation of~$\rho(h_i)$, and $10n$~flops
per iteration.  The results of a numerical experiment demonstrating
the convergence of Algorithm~\ref{al:raysphere} on~$S^{20}$ are shown
in Figure~\ref{fig:raysphere}.
\end{example}

Fuhrmann and Liu~\cite{FuhrLiu} provide a conjugate gradient algorithm
for Rayleigh's quotient on the sphere that uses an azimuthal
projection onto tangent planes.

\begin{figure}
\vskip-.25in
\begingroup  % GNUPlot for $\SO(20)$
% Abbreviations
\let\p=\put
\let\r=\rule
\def\c{\circle{12}}
\def\D{\raisebox{-1.2pt}{\makebox(0,0){$\Diamond$}}}
\def\s{\circle*{18}}
% GNUPLOT: LaTeX picture
\setlength{\unitlength}{0.240900pt}
\ifx\plotpoint\undefined\newsavebox{\plotpoint}\fi
\begin{picture}(1424,900)(0,0)
\tenrm
\sbox{\plotpoint}{\r[-0.175pt]{0.350pt}{0.350pt}}%
\p(333,158){\r[-0.175pt]{0.350pt}{151.526pt}}
\p(264,158){\r[-0.175pt]{4.818pt}{0.350pt}}
\p(242,158){\makebox(0,0)[r]{$10^{\gnulog 1e-05}$}}
\p(1340,158){\r[-0.175pt]{4.818pt}{0.350pt}}
\p(264,190){\r[-0.175pt]{2.409pt}{0.350pt}}
\p(1350,190){\r[-0.175pt]{2.409pt}{0.350pt}}
\p(264,208){\r[-0.175pt]{2.409pt}{0.350pt}}
\p(1350,208){\r[-0.175pt]{2.409pt}{0.350pt}}
\p(264,221){\r[-0.175pt]{2.409pt}{0.350pt}}
\p(1350,221){\r[-0.175pt]{2.409pt}{0.350pt}}
\p(264,231){\r[-0.175pt]{2.409pt}{0.350pt}}
\p(1350,231){\r[-0.175pt]{2.409pt}{0.350pt}}
\p(264,240){\r[-0.175pt]{2.409pt}{0.350pt}}
\p(1350,240){\r[-0.175pt]{2.409pt}{0.350pt}}
\p(264,247){\r[-0.175pt]{2.409pt}{0.350pt}}
\p(1350,247){\r[-0.175pt]{2.409pt}{0.350pt}}
\p(264,253){\r[-0.175pt]{2.409pt}{0.350pt}}
\p(1350,253){\r[-0.175pt]{2.409pt}{0.350pt}}
\p(264,258){\r[-0.175pt]{2.409pt}{0.350pt}}
\p(1350,258){\r[-0.175pt]{2.409pt}{0.350pt}}
\p(264,263){\r[-0.175pt]{4.818pt}{0.350pt}}
\p(242,263){\makebox(0,0)[r]{$10^{\gnulog 1e-04}$}}
\p(1340,263){\r[-0.175pt]{4.818pt}{0.350pt}}
\p(264,294){\r[-0.175pt]{2.409pt}{0.350pt}}
\p(1350,294){\r[-0.175pt]{2.409pt}{0.350pt}}
\p(264,313){\r[-0.175pt]{2.409pt}{0.350pt}}
\p(1350,313){\r[-0.175pt]{2.409pt}{0.350pt}}
\p(264,326){\r[-0.175pt]{2.409pt}{0.350pt}}
\p(1350,326){\r[-0.175pt]{2.409pt}{0.350pt}}
\p(264,336){\r[-0.175pt]{2.409pt}{0.350pt}}
\p(1350,336){\r[-0.175pt]{2.409pt}{0.350pt}}
\p(264,344){\r[-0.175pt]{2.409pt}{0.350pt}}
\p(1350,344){\r[-0.175pt]{2.409pt}{0.350pt}}
\p(264,351){\r[-0.175pt]{2.409pt}{0.350pt}}
\p(1350,351){\r[-0.175pt]{2.409pt}{0.350pt}}
\p(264,358){\r[-0.175pt]{2.409pt}{0.350pt}}
\p(1350,358){\r[-0.175pt]{2.409pt}{0.350pt}}
\p(264,363){\r[-0.175pt]{2.409pt}{0.350pt}}
\p(1350,363){\r[-0.175pt]{2.409pt}{0.350pt}}
\p(264,368){\r[-0.175pt]{4.818pt}{0.350pt}}
\p(242,368){\makebox(0,0)[r]{$10^{\gnulog 1e-03}$}}
\p(1340,368){\r[-0.175pt]{4.818pt}{0.350pt}}
\p(264,399){\r[-0.175pt]{2.409pt}{0.350pt}}
\p(1350,399){\r[-0.175pt]{2.409pt}{0.350pt}}
\p(264,418){\r[-0.175pt]{2.409pt}{0.350pt}}
\p(1350,418){\r[-0.175pt]{2.409pt}{0.350pt}}
\p(264,431){\r[-0.175pt]{2.409pt}{0.350pt}}
\p(1350,431){\r[-0.175pt]{2.409pt}{0.350pt}}
\p(264,441){\r[-0.175pt]{2.409pt}{0.350pt}}
\p(1350,441){\r[-0.175pt]{2.409pt}{0.350pt}}
\p(264,449){\r[-0.175pt]{2.409pt}{0.350pt}}
\p(1350,449){\r[-0.175pt]{2.409pt}{0.350pt}}
\p(264,456){\r[-0.175pt]{2.409pt}{0.350pt}}
\p(1350,456){\r[-0.175pt]{2.409pt}{0.350pt}}
\p(264,462){\r[-0.175pt]{2.409pt}{0.350pt}}
\p(1350,462){\r[-0.175pt]{2.409pt}{0.350pt}}
\p(264,468){\r[-0.175pt]{2.409pt}{0.350pt}}
\p(1350,468){\r[-0.175pt]{2.409pt}{0.350pt}}
\p(264,473){\r[-0.175pt]{4.818pt}{0.350pt}}
\p(242,473){\makebox(0,0)[r]{$10^{\gnulog 1e-02}$}}
\p(1340,473){\r[-0.175pt]{4.818pt}{0.350pt}}
\p(264,504){\r[-0.175pt]{2.409pt}{0.350pt}}
\p(1350,504){\r[-0.175pt]{2.409pt}{0.350pt}}
\p(264,523){\r[-0.175pt]{2.409pt}{0.350pt}}
\p(1350,523){\r[-0.175pt]{2.409pt}{0.350pt}}
\p(264,536){\r[-0.175pt]{2.409pt}{0.350pt}}
\p(1350,536){\r[-0.175pt]{2.409pt}{0.350pt}}
\p(264,546){\r[-0.175pt]{2.409pt}{0.350pt}}
\p(1350,546){\r[-0.175pt]{2.409pt}{0.350pt}}
\p(264,554){\r[-0.175pt]{2.409pt}{0.350pt}}
\p(1350,554){\r[-0.175pt]{2.409pt}{0.350pt}}
\p(264,561){\r[-0.175pt]{2.409pt}{0.350pt}}
\p(1350,561){\r[-0.175pt]{2.409pt}{0.350pt}}
\p(264,567){\r[-0.175pt]{2.409pt}{0.350pt}}
\p(1350,567){\r[-0.175pt]{2.409pt}{0.350pt}}
\p(264,573){\r[-0.175pt]{2.409pt}{0.350pt}}
\p(1350,573){\r[-0.175pt]{2.409pt}{0.350pt}}
\p(264,577){\r[-0.175pt]{4.818pt}{0.350pt}}
\p(242,577){\makebox(0,0)[r]{$10^{\gnulog 1e-01}$}}
\p(1340,577){\r[-0.175pt]{4.818pt}{0.350pt}}
\p(264,609){\r[-0.175pt]{2.409pt}{0.350pt}}
\p(1350,609){\r[-0.175pt]{2.409pt}{0.350pt}}
\p(264,627){\r[-0.175pt]{2.409pt}{0.350pt}}
\p(1350,627){\r[-0.175pt]{2.409pt}{0.350pt}}
\p(264,640){\r[-0.175pt]{2.409pt}{0.350pt}}
\p(1350,640){\r[-0.175pt]{2.409pt}{0.350pt}}
\p(264,651){\r[-0.175pt]{2.409pt}{0.350pt}}
\p(1350,651){\r[-0.175pt]{2.409pt}{0.350pt}}
\p(264,659){\r[-0.175pt]{2.409pt}{0.350pt}}
\p(1350,659){\r[-0.175pt]{2.409pt}{0.350pt}}
\p(264,666){\r[-0.175pt]{2.409pt}{0.350pt}}
\p(1350,666){\r[-0.175pt]{2.409pt}{0.350pt}}
\p(264,672){\r[-0.175pt]{2.409pt}{0.350pt}}
\p(1350,672){\r[-0.175pt]{2.409pt}{0.350pt}}
\p(264,677){\r[-0.175pt]{2.409pt}{0.350pt}}
\p(1350,677){\r[-0.175pt]{2.409pt}{0.350pt}}
\p(264,682){\r[-0.175pt]{4.818pt}{0.350pt}}
\p(242,682){\makebox(0,0)[r]{$10^{\gnulog 1e+00}$}}
\p(1340,682){\r[-0.175pt]{4.818pt}{0.350pt}}
\p(264,714){\r[-0.175pt]{2.409pt}{0.350pt}}
\p(1350,714){\r[-0.175pt]{2.409pt}{0.350pt}}
\p(264,732){\r[-0.175pt]{2.409pt}{0.350pt}}
\p(1350,732){\r[-0.175pt]{2.409pt}{0.350pt}}
\p(264,745){\r[-0.175pt]{2.409pt}{0.350pt}}
\p(1350,745){\r[-0.175pt]{2.409pt}{0.350pt}}
\p(264,755){\r[-0.175pt]{2.409pt}{0.350pt}}
\p(1350,755){\r[-0.175pt]{2.409pt}{0.350pt}}
\p(264,764){\r[-0.175pt]{2.409pt}{0.350pt}}
\p(1350,764){\r[-0.175pt]{2.409pt}{0.350pt}}
\p(264,771){\r[-0.175pt]{2.409pt}{0.350pt}}
\p(1350,771){\r[-0.175pt]{2.409pt}{0.350pt}}
\p(264,777){\r[-0.175pt]{2.409pt}{0.350pt}}
\p(1350,777){\r[-0.175pt]{2.409pt}{0.350pt}}
\p(264,782){\r[-0.175pt]{2.409pt}{0.350pt}}
\p(1350,782){\r[-0.175pt]{2.409pt}{0.350pt}}
\p(264,787){\r[-0.175pt]{4.818pt}{0.350pt}}
\p(242,787){\makebox(0,0)[r]{$10^{\gnulog 1e+01}$}}
\p(1340,787){\r[-0.175pt]{4.818pt}{0.350pt}}
\p(333,158){\r[-0.175pt]{0.350pt}{4.818pt}} \p(333,113){\makebox(0,0){$0$}}
\p(333,767){\r[-0.175pt]{0.350pt}{4.818pt}}
\p(470,158){\r[-0.175pt]{0.350pt}{4.818pt}} \p(470,113){\makebox(0,0){$20$}}
\p(470,767){\r[-0.175pt]{0.350pt}{4.818pt}}
\p(607,158){\r[-0.175pt]{0.350pt}{4.818pt}} \p(607,113){\makebox(0,0){$40$}}
\p(607,767){\r[-0.175pt]{0.350pt}{4.818pt}}
\p(744,158){\r[-0.175pt]{0.350pt}{4.818pt}} \p(744,113){\makebox(0,0){$60$}}
\p(744,767){\r[-0.175pt]{0.350pt}{4.818pt}}
\p(881,158){\r[-0.175pt]{0.350pt}{4.818pt}} \p(881,113){\makebox(0,0){$80$}}
\p(881,767){\r[-0.175pt]{0.350pt}{4.818pt}}
\p(1018,158){\r[-0.175pt]{0.350pt}{4.818pt}} \p(1018,113){\makebox(0,0){$100$}}
\p(1018,767){\r[-0.175pt]{0.350pt}{4.818pt}}
\p(1155,158){\r[-0.175pt]{0.350pt}{4.818pt}} \p(1155,113){\makebox(0,0){$120$}}
\p(1155,767){\r[-0.175pt]{0.350pt}{4.818pt}}
\p(1292,158){\r[-0.175pt]{0.350pt}{4.818pt}} \p(1292,113){\makebox(0,0){$140$}}
\p(1292,767){\r[-0.175pt]{0.350pt}{4.818pt}}
\p(264,158){\r[-0.175pt]{264.026pt}{0.350pt}}
\p(1360,158){\r[-0.175pt]{0.350pt}{151.526pt}}
\p(264,787){\r[-0.175pt]{264.026pt}{0.350pt}}
\p(-43,472){\makebox(0,0)[l]{\shortstack{$\|H_i-D_i\|$}}}
\p(812,68){\makebox(0,0){Step $i$}}
\p(264,158){\r[-0.175pt]{0.350pt}{151.526pt}}
\p(1257,504){\makebox(0,0)[r]{Method of Steepest Descent}}
\p(1301,504){\c} \p(333,712){\c} \p(339,698){\c} \p(346,693){\c}
\p(353,689){\c} \p(360,687){\c} \p(367,685){\c} \p(374,683){\c}
\p(380,682){\c} \p(387,680){\c} \p(394,679){\c} \p(401,678){\c}
\p(408,677){\c} \p(415,676){\c} \p(422,675){\c} \p(428,674){\c}
\p(435,673){\c} \p(442,672){\c} \p(449,671){\c} \p(456,670){\c}
\p(463,670){\c} \p(470,669){\c} \p(476,668){\c} \p(483,668){\c}
\p(490,667){\c} \p(497,666){\c} \p(504,666){\c} \p(511,665){\c}
\p(517,665){\c} \p(524,664){\c} \p(531,664){\c} \p(538,663){\c}
\p(545,663){\c} \p(552,662){\c} \p(559,662){\c} \p(565,661){\c}
\p(572,661){\c} \p(579,660){\c} \p(586,660){\c} \p(593,660){\c}
\p(600,659){\c} \p(607,659){\c} \p(613,658){\c} \p(620,658){\c}
\p(627,657){\c} \p(634,657){\c} \p(641,657){\c} \p(648,656){\c}
\p(654,656){\c} \p(661,656){\c} \p(668,655){\c} \p(675,655){\c}
\p(682,654){\c} \p(689,654){\c} \p(696,654){\c} \p(702,653){\c}
\p(709,653){\c} \p(716,653){\c} \p(723,652){\c} \p(730,652){\c}
\p(737,652){\c} \p(744,651){\c} \p(750,651){\c} \p(757,651){\c}
\p(764,650){\c} \p(771,650){\c} \p(778,650){\c} \p(785,649){\c}
\p(791,649){\c} \p(798,649){\c} \p(805,648){\c} \p(812,648){\c}
\p(819,648){\c} \p(826,647){\c} \p(833,647){\c} \p(839,647){\c}
\p(846,646){\c} \p(853,646){\c} \p(860,646){\c} \p(867,646){\c}
\p(874,645){\c} \p(881,645){\c} \p(887,645){\c} \p(894,644){\c}
\p(901,644){\c} \p(908,644){\c} \p(915,643){\c} \p(922,643){\c}
\p(928,643){\c} \p(935,643){\c} \p(942,642){\c} \p(949,642){\c}
\p(956,642){\c} \p(963,641){\c} \p(970,641){\c} \p(976,641){\c}
\p(983,641){\c} \p(990,640){\c} \p(997,640){\c} \p(1004,640){\c}
\p(1011,639){\c} \p(1018,639){\c} \p(1024,639){\c} \p(1031,639){\c}
\p(1038,638){\c} \p(1045,638){\c} \p(1052,638){\c} \p(1059,637){\c}
\p(1065,637){\c} \p(1072,637){\c} \p(1079,637){\c} \p(1086,636){\c}
\p(1093,636){\c} \p(1100,636){\c} \p(1107,635){\c} \p(1113,635){\c}
\p(1120,635){\c} \p(1127,635){\c} \p(1134,634){\c} \p(1141,634){\c}
\p(1148,634){\c} \p(1155,634){\c} \p(1161,633){\c} \p(1168,633){\c}
\p(1175,633){\c} \p(1182,632){\c} \p(1189,632){\c} \p(1196,632){\c}
\p(1202,632){\c} \p(1209,631){\c} \p(1216,631){\c} \p(1223,631){\c}
\p(1230,631){\c} \p(1237,630){\c} \p(1244,630){\c} \p(1250,630){\c}
\p(1257,629){\c} \p(1264,629){\c} \p(1271,629){\c} \p(1278,629){\c}
\p(1285,628){\c} \p(1292,628){\c} \p(1298,628){\c} \p(1305,628){\c}
\p(1312,627){\c} \p(1319,627){\c} \p(1326,627){\c} \p(1333,627){\c}
\p(1339,626){\c} \p(1346,626){\c} \p(1353,626){\c} \p(1360,625){\c}
\p(1257,459){\makebox(0,0)[r]{Conjugate Gradient Method}}
\p(1301,459){\D} \p(333,712){\D} \p(339,698){\D} \p(346,691){\D}
\p(353,685){\D} \p(360,680){\D} \p(367,675){\D} \p(374,671){\D}
\p(380,667){\D} \p(387,664){\D} \p(394,661){\D} \p(401,658){\D}
\p(408,654){\D} \p(415,649){\D} \p(422,646){\D} \p(428,643){\D}
\p(435,640){\D} \p(442,637){\D} \p(449,632){\D} \p(456,627){\D}
\p(463,624){\D} \p(470,618){\D} \p(476,615){\D} \p(483,606){\D}
\p(490,600){\D} \p(497,596){\D} \p(504,576){\D} \p(511,554){\D}
\p(517,546){\D} \p(524,541){\D} \p(531,536){\D} \p(538,533){\D}
\p(545,530){\D} \p(552,526){\D} \p(559,523){\D} \p(565,520){\D}
\p(572,517){\D} \p(579,512){\D} \p(586,508){\D} \p(593,504){\D}
\p(600,502){\D} \p(607,498){\D} \p(613,495){\D} \p(620,490){\D}
\p(627,484){\D} \p(634,479){\D} \p(641,474){\D} \p(648,469){\D}
\p(654,459){\D} \p(661,451){\D} \p(668,444){\D} \p(675,428){\D}
\p(682,410){\D} \p(689,403){\D} \p(696,399){\D} \p(702,396){\D}
\p(709,393){\D} \p(716,391){\D} \p(723,385){\D} \p(730,383){\D}
\p(737,379){\D} \p(744,376){\D} \p(750,372){\D} \p(757,368){\D}
\p(764,364){\D} \p(771,362){\D} \p(778,359){\D} \p(785,354){\D}
\p(791,351){\D} \p(798,347){\D} \p(805,342){\D} \p(812,335){\D}
\p(819,329){\D} \p(826,325){\D} \p(833,320){\D} \p(839,314){\D}
\p(846,304){\D} \p(853,299){\D} \p(860,292){\D} \p(867,287){\D}
\p(874,287){\D} \p(881,286){\D} \p(887,284){\D} \p(894,284){\D}
\p(901,286){\D} \p(908,286){\D} \p(1257,414){\makebox(0,0)[r]{Newton's
Method}} \p(1301,414){\s} \p(333,712){\s} \p(339,573){\s} \p(346,269){\s}
\p(353,244){\s} \p(360,246){\s} \p(367,240){\s} \p(374,240){\s}
\p(380,235){\s}
\end{picture}
\vskip-.25in
\caption[]{\protect\small\smallfrak\label{fig:SOnconv}\ignorespaces
Maximization of $\lyapunov$ on $\SO(20)$ (dimension $\SO(20)=190$),
where $N=\diag(20,\ldots,1)$.  The $i$th iterate is $H_i=\Theta_i^\T
Q\Theta_i$, $D_i$ is the diagonal matrix of eigenvalues of~$H_i$,
$H_0$ is near $N$, and $\|\cdot\|$ is the norm induced by the standard
inner product on~$\gl(n)$.  Geodesics and parallel translation were
computed using the algorithm of Ward and
Gray~\protect\cite{WG:1,WG:2}; the step sizes for the method of
steepest descent and the conjugate gradient method were computed using
Brockett's estimate~\protect\cite{Brockett:grad}.\parfillskip=0pt}
\endgroup  % GNUPlot for $\SO(20)$
\end{figure}

\begin{example}[The function $\lyapunov$]\label{eg:trHNcg}\ignorespaces
Let $\Theta$, $Q$, and $H$ be as in Examples \ref{eg:trHNgrad} and
\ref{eg:trHNnewton}.  As before, the natural Riemannian structure
of~$\SO(n)$ is used.  Let $X$, $Y\in\so(n)$.  The parallel translation
of~$Y$ along the geodesic $e^{tX}$ is given by the formula $\tau
Y=L_{e^{tX}{*}}e^{-(t/2)X}Ye^{(t/2)X}$, where $L_g$ denotes left
translation by~$g$.  Brockett's estimate (n.b.\
Eq.~(\ref{eq:Brockettest})) for the step size may be used in Step~1 of
Algorithm~\ref{al:cgman}.  The results of a numerical experiment
demonstrating the convergence of the conjugate gradient method
in~$\SO(20)$ are shown in Figure~\ref{fig:SOnconv}.
\end{example}

\begin{acknowledgments} The author enthusiastically thanks Tony Bloch
and the Fields Institute for the invitation to speak at the Fields
Institute and for their generous support during his visit.  The author
also thanks Roger Brockett for his suggestion to investigate conjugate
gradient methods on manifolds and for his criticism of this work, and
the referee for his helpful suggestions.  This work was supported in
part by the National Science Foundation under the Engineering Research
Center Program \hbox{NSF D CRD-8803012}, the Army Research Office
under Grant \hbox{DAA103-92-G-0164} supporting the Brown, Harvard, and
|MIT| Center for Intelligent Control, and by |DARPA| under Air Force
contract \hbox{F49620-92-J-0466}. \parfillskip=0pt
\end{acknowledgments}

%%%%%%%%%%%%%%%%%%%%%%%%%%%%%%%%%%%%%%%%%%%%%%%%%%%%%%%%%%%%%%%%%%%%%%%
% The References                                                      %
%%%%%%%%%%%%%%%%%%%%%%%%%%%%%%%%%%%%%%%%%%%%%%%%%%%%%%%%%%%%%%%%%%%%%%%
\let\bibliographysize=\footnotesize  % Want the standard 8-point here.

\end{document}